\documentclass[final, 12pt]{amsart}
\newif\ifsharetheoremcounter\sharetheoremcountertrue

\usepackage{amsmath, amssymb, esint}
\usepackage{graphicx, xcolor}
\usepackage{microtype}

\ifdefined\ifenablehyperref\else\expandafter\newif\csname
        ifenablehyperref\endcsname\enablehyperreftrue\fi
\ifenablehyperref\usepackage[final]{hyperref}\else\fi

\ifdefined\ifdraft\else\expandafter\newif\csname ifdraft\endcsname\fi
\ifdim\overfullrule>0pt \drafttrue\else\draftfalse\fi

\ifdefined\ifsharetheoremcounter\else\expandafter\newif\csname
        ifsharetheoremcounter\endcsname\sharetheoremcounterfalse\fi

\allowdisplaybreaks[4]

\newcounter{modifiedcounter}

\ifdraft
    \newcommand{\fixme}[1]{\marginpar{\parbox{0in}{\fbox{\parbox{0.6255in}{
            \raggedright \scriptsize #1}}}}}
    
    \newcommand{\dummyline}[1]{\vspace{0.5em}\noindent\rule{\textwidth}{#1 pt}
            \vspace{0.5em}}

\else
    \newcommand{\fixme}[1]{}
    
    \newcommand{\dummyline}[1]{}

\fi

\newlength{\defaultarrayrulewidth}
\newcommand{\owedge}{\mathbin{\ooalign{$\bigcirc$\cr\hidewidth$\,\,\wedge$
        \hidewidth\cr}}}
\newcommand{\ddpfir}[1]{\frac{\partial}{\partial#1}}

\newcommand{\dddfir}[1]{\frac{d}{d#1}}
\newcommand{\dddnth}[2]{\frac{d^#2}{d#1^#2}}
\newcommand{\Boxg}[1]{\left(\ddpfir{t}-\Delta_{#1}\right)}

\theoremstyle{remark}
\numberwithin{equation}{section}

\newtheorem*{claim*}{Claim}

\newtheorem{theorem}{Theorem}[section]

\ifsharetheoremcounter
    \newtheorem{definition}[theorem]{Definition}
    \newtheorem{claim}[theorem]{Claim}
    \newtheorem{lemma}[theorem]{Lemma}
    \newtheorem{proposition}[theorem]{Proposition}
    \newtheorem{corollary}[theorem]{Corollary}
    \newtheorem{remark}[theorem]{Remark}
\else

    \newtheorem{lemma}{Lemma}[section]
    \newtheorem{proposition}{Proposition}[section]
    \newtheorem{corollary}{Corollary}[section]
    \newtheorem{remark}{Remark}[section]
\fi

\DeclareMathOperator{\Inj}{inj}

\DeclareMathOperator{\Ric}{Ric}

\DeclareMathOperator{\Rm}{Rm}

\DeclareMathOperator{\tr}{tr}
\DeclareMathOperator{\Vol}{Vol}
\DeclareMathOperator{\VolB}{VolB}

\newcommand{\Rick}[1]{\text{$#1$-Ricci}}

\DeclareMathOperator{\Area}{Area}
\DeclareMathOperator{\conj}{conj}

\begin{document}

\title[$3$-manifolds with $2$-Ricci curvature lower bound]{Volume comparison for $3$-manifolds with $2$-Ricci curvature lower bound and Ricci flow}

\author[S. Huang]{Shaochuang Huang}
\address[Shaochuang Huang]{School of Science, Shenzhen Campus of Sun Yat-sen University, No. 66, Gongchang Road, Guangming District, Shenzhen, Guangdong 518107, P. R. China.}
\email{huangshch23@mail.sysu.edu.cn}
\thanks{S. Huang is partially supported by a regular fund from Shenzhen Science
        and Technology Program No. JCYJ20240813151005007 and a start-up
        fund from SYSU}

\author[Z. Peng]{Zhuo Peng}
\address[Zhuo Peng]{School of Science, Shenzhen Campus of Sun Yat-sen University, No. 66, Gongchang Road, Guangming District, Shenzhen, Guangdong 518107, P. R. China.}
\email{pengzh55@mail2.sysu.edu.cn}

\subjclass[2020]{Primary 53C21; Secondary 53E20}

\begin{abstract}
    In this note, we study volume comparison for $3$-manifolds with $2$-Ricci curvature lower bound. We establish monotonicity and rigidity for geodesic sphere area and ball volume, and derive the pointed Gromov–Hausdorff precompactness under a uniform lower bound on the conjugate radius for $3$-manifolds with $2$-Ricci curvature lower bound. We also discuss the almost preservation of $2$-Ricci curvature lower bound and the  local non-collapsing propagation along Ricci flow. Finally, we  point out a topological rigidity result on nonnegative scalar curvature. 
\end{abstract}

\maketitle

\section{Introduction}

The study of topology and geometry of manifolds with sectional, Ricci, scalar curvature
        lower bound is one of the fundamental problems in differential geometry.
In this note, we study $3$-dimensional complete noncompact Riemannian manifolds
        with spectral $2$-Ricci curvature lower bound, namely the sum of the
        $2$ smallest eigenvalues of Ricci operator bounded from below, which is in between Ricci curvature and scalar curvature.  The notion of spectral  $k$-Ricci curvature appears first in Labbi's work \cite{Labbi} on the stability of positivity of spectral  $k$-Ricci curvature under surgeries, where the notion of this curvature condition is defined in an equivalent Grassmannian formulation. Later, Dussan and Noronha \cite{DN02PJM} study topological vanishing results for compact manifolds with $2$-nonnegative Ricci curvature by direct spectral definition. Wolfson defines general $k$-positive Ricci curvature directly by considering the sum of the
        $k$ smallest eigenvalues of Ricci operator is positive in \cite{Wolfson12LMSLNS}, where he studies connected sums and surgery preservation, together with consequences and conjectures concerning fundamental groups and fill radius for compact manifolds. In \cite{HKKZ}, Hirsch, Kazaras, Khuri and Zhang study width of Riemannian bands and related topics under $2$-Ricci curvature bounded from below by $4$. In \cite{CM-MathAnn-26}, Cucinotta and Mondino obtain a large-scale linear volume control under almost Ricci curvature lower bound and an integral positive lower bound in terms of  $2$-Ricci curvature. 

It is well-known that the scalar curvature controls the volume of balls at the infinitesimal level and the lower bound of scalar curvature alone cannot guarantee the Bishop-Gromov type volume comparison, see \cite{kkk-cvpde} and references therein for example. In this note, we study volume comparison for $3$-dimensional
        manifolds with $2$-Ricci curvature lower bound and we obtain the
        following monotonicity and rigidity for geodesic sphere area
        and ball volume.

\begin{theorem}
   Let $(M^3,g)$ be a $3$-dimensional Riemannian manifold with
            $\Rick{2}\ge-K$ for some $K\ge0$ and $p\in M$ with
            $\Inj_g(p)\geq R$ for some $R>0$. Then for any $0<r<R$, we have
    \[
        r\mapsto\frac{A_g(p,r)-A_{\bar{K}}(r)}{A_{\bar{K}}'(r)}
                \quad\text{and}\quad
                r\mapsto\frac{V_g(p,r)-V_{\bar{K}}(r)}{V_{\bar{K}}'(r)}
    \]
    are both non-increasing on $(0,R)$. In particular, we have
     \begin{equation} \label{eqn vol. upper}
        A_g(p,r)\le A_{\bar{K}}(r)\quad\text{and}\quad
                V_g(p,r)\le V_{\bar{K}}(r)
    \end{equation} for any $0<r<R$. 
    Moreover, any of the above inequalities becomes an equality for some
            $r_0\in(0,R)$ if and only if $B_g(p,r_0)$ is isometric to the
            geodesic ball of radius $r_0$ in the $3$-dimensional simply
            connected space form with constant sectional curvature $\bar{K}$. Here $A_{\bar{K}}(r)$ and $V_{\bar{K}}(r)$ are the area and volume of the
            comparison model space form with constant sectional curvature
            $\bar{K}:=-K/4$.
\end{theorem}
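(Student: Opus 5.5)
The approach is to derive a second-order differential inequality for the area $A_g(p,r)$ of geodesic spheres, then compare with the model via a Wronskian argument. Since $r<R\le\Inj_g(p)$, the distance function $\rho=d_g(p,\cdot)$ is smooth on $B_g(p,R)\setminus\{p\}$. Hence each $S_r=\partial B_g(p,r)$ is a smooth embedded $2$-sphere with unit normal $\nu=\nabla\rho$. Let $H$ be its mean curvature and $\mathrm{II}$ its second fundamental form. The first variation gives $A'(r)=\int_{S_r}H$. Combined with the Riccati equation $\partial_r H=-|\mathrm{II}|^2-\Ric(\nu,\nu)$, the second variation gives
\[
A''(r)=\int_{S_r}\bigl(H^2-|\mathrm{II}|^2-\Ric(\nu,\nu)\bigr).
\]

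\textbf{Step 1: the area inequality.} The key observation, valid only in dimension $3$, is the traced Gauss equation
\[
H^2-|\mathrm{II}|^2=2K_{S_r}-R+2\Ric(\nu,\nu),
\]
where $K_{S_r}$ is the Gauss curvature of $S_r$ and $R$ the scalar curvature of $g$. Substituting and applying Gauss--Bonnet on the sphere $S_r$ gives
\[
A''(r)=8\pi-\int_{S_r}\bigl(\Ric(e_2,e_2)+\Ric(e_3,e_3)\bigr),
\]
with $\{e_2,e_3\}$ an orthonormal frame of $TS_r$. A trace of $\Ric$ over a $2$-plane is at least the sum of the two smallest eigenvalues, so $\Rick{2}\ge -K$ yields $A''\le 8\pi+KA$. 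For the model, $A_{\bar K}(r)=16\pi K^{-1}\sinh^2(\sqrt{K}\,r/2)$ satisfies $A_{\bar K}''=8\pi+KA_{\bar K}$ exactly (when $K=0$, read this as $4\pi r^2$).

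\textbf{Step 2: the Wronskian argument.} Set $f=A_g-A_{\bar K}$, so that $f''\le Kf$. Set $\psi=A_{\bar K}'>0$ on $(0,R)$, which satisfies $\psi''=K\psi$. Then
\[
(f'\psi-f\psi')'=f''\psi-f\psi''\le 0 .
\]
Small-$r$ asymptotics give $A_g=4\pi r^2+O(r^4)$, so $f=O(r^4)$ and $f'=O(r^3)$, while $\psi\sim 8\pi r$. Hence $f'\psi-f\psi'\to 0$ as $r\to0$, so $f'\psi-f\psi'\le 0$, i.e.\ $(f/\psi)'\le0$. This is the first monotonicity. Since $f/\psi\to0$ at $0$, it also gives $f\le 0$.

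\textbf{Step 3: the volume statement.} Note $V_g-V_{\bar K}=\int_0^r f$ and $V_{\bar K}'=A_{\bar K}=\int_0^r\psi$. The standard Gromov lemma (if $f/\psi$ is non-increasing with $\psi>0$, then $\int_0^rf\big/\int_0^r\psi$ is non-increasing) gives the second monotonicity. Letting $r\to0$ in both monotone quotients, whose limits are $0$, gives \eqref{eqn vol. upper}.

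\textbf{Step 4: rigidity, the main obstacle.} The difficulty is that the identity in Step 1 discards no umbilicity term, so equality does not immediately force $S_r$ to be umbilic. Suppose equality holds at some $r_0$. Monotonicity together with $f\le0$ forces $f\equiv0$ on $(0,r_0]$. Then $\int_{S_r}\bigl(\Ric(e_2,e_2)+\Ric(e_3,e_3)+K\bigr)=0$ with a nonnegative integrand, so the trace of $\Ric$ over $TS_r$ equals $-K$ pointwise, for every $r<r_0$.

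I would then extract full information from $\Rick{2}\ge-K$. Since that trace equals $-K$ and is bounded below by $\lambda_1+\lambda_2\ge -K$, it is minimal among all $2$-plane traces. So $TS_r$ is spanned by eigenvectors for the two smallest eigenvalues, and $\Ric(\nu,e_i)=0$.

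To pin down the eigenvalues, I would try first to use that a minimizing plane must minimize for all nearby planes, which is automatic here and gives only the above. So an additional input is needed; the candidate is to vary the base point. Every point $q\in B_g(p,r_0)$ lies on spheres around nearby centers $p'$ close to $p$. The equality case applies equally around $p'$, because its own monotone quotient is squeezed between the quotient at $p$ and its limit $0$. Hence every plane near $\nu^\perp$ is also a minimizing plane. This forces $\lambda_1=\lambda_2=\lambda_3=-K/2$, so $g$ is Einstein with $\Ric=-\tfrac K2 g$. In dimension $3$ this means constant sectional curvature $-K/4=\bar K$ on $B_g(p,r_0)$, and since $r_0$ is below the injectivity radius, the exponential map identifies it isometrically with the model ball. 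The converse direction is immediate.

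The step I expect to be delicate is exactly this propagation of equality to other centers, which needs the injectivity-radius hypothesis near $p$. If it fails, the fallback is to strengthen Step 1 by keeping the term $H^2-|\mathrm{II}|^2\le H^2/2$ in a comparison for $H$ itself, to obtain umbilicity directly.
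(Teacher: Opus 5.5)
\textbf{Steps 1--3 versus the paper.} Your Steps 1--3 are essentially the paper's argument. The Gauss equation and Gauss--Bonnet give $A_g''\le 8\pi+KA_g$. The Wronskian $f'\psi-f\psi'$ gives the area monotonicity, and the Gromov-type integration gives the volume monotonicity.

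\textbf{The gap in Step 4.} You claim that equality ``applies equally around $p'$, because its own monotone quotient is squeezed between the quotient at $p$ and its limit $0$''. Nothing justifies this. The argument contains no lower bound of $A_g(p',r)-A_{\bar K}(r)$ by $A_g(p,r)-A_{\bar K}(r)=0$. The only relations between the two centers are ball inclusions such as $B_g(p',r-d)\subset B_g(p,r)\subset B_g(p',r+d)$. These give inequalities with shifted radii and never produce equality at $p'$. Proving $A_g(p',r)=A_{\bar K}(r)$ is no easier than the rigidity you are trying to prove, so this route is circular. Without it you only know that $\nu$ is a top eigenvector of $\Ric$ along each $S_r$, and that does not force $\Ric$ to be a multiple of $g$.

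\textbf{The missing idea.} What is needed is a pointwise comparison for $H$. Your fallback points toward this but does not carry it out. From the equality case you already have
\[
\Ric(\nu,\nu)=\lambda_3\ge\tfrac12(\lambda_1+\lambda_2)\ge -K/2 .
\]
Hence
\[
\partial_r H=-\Ric(\nu,\nu)-|\mathrm{II}|^2\le -\tfrac{H^2}{2}+\tfrac K2 ,
\]
and the model mean curvature $H_{\bar K}$ satisfies this with equality. Riccati comparison from $r=0$ then gives $H\le H_{\bar K}$ on $S_r$. The key step you are missing is the integral identity
\[
\int_{S_r}(H_{\bar K}-H)\,d\sigma=A_g(p,r)H_{\bar K}(r)-A_g'(p,r)=A_{\bar K}(r)H_{\bar K}(r)-A_{\bar K}'(r)=0,
\]
which holds because $A_g\equiv A_{\bar K}$ on $(0,r_0]$. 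It upgrades the inequality to $H\equiv H_{\bar K}$. That forces equality in the Riccati inequality, so $\Ric(\nu,\nu)=-K/2$ and $S_r$ is umbilic.

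\textbf{Concluding the argument.} You also know $\lambda_1+\lambda_2=-K$, because the tangential trace equals $-K$ and is minimal. With $\lambda_1\le\lambda_2\le\lambda_3=-K/2$, this forces $\Ric=-\frac K2 g$, which is how the paper concludes. Note that umbilicity alone does not close the argument. What closes it is the equality $\Ric(\nu,\nu)=-K/2$ extracted from $H\equiv H_{\bar K}$.
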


Furthermore, the area and volume upper bounds \eqref{eqn vol. upper} and rigidity can also be obtained if we relax the assumption on injectivity radius lower bound by conjugate radius lower bound, see Corollary \ref{cor vol. comp.}.

 As corollaries, we obtain volume doubling estimates and large-scale volume comparison. In particular, we establish the following Gromov-Hausdorff precompactness of
        $3$-dimensional pointed complete Riemannian manifolds with $2$-Ricci
        curvature lower bound and conjugate radius lower bound.

\begin{corollary}
    The set of $3$-dimensional pointed complete Riemannian manifolds
            with $\Rick{2}\ge-K$ for some fixed $K\ge0$ and
            conjugate radius uniformly bounded from below by $j_0>0$
            is precompact in the pointed Gromov-Hausdorff topology.
\end{corollary}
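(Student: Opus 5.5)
The plan is to reduce the corollary to Gromov's precompactness criterion: a class of pointed complete length spaces is precompact in the pointed Gromov--Hausdorff topology if, for every $r>0$ and $\epsilon>0$, the number of disjoint $\epsilon$-balls that fit in $B(p,r)$ is bounded by some $N(r,\epsilon)$ independent of the space. To get this bound I will compare volumes, but not on $M$ itself. The injectivity radius is not controlled, so I pass to a space where it is. Let $j_1:=\min\{j_0,\pi/(2\sqrt{\max(\bar K,0)})\}$; since $\bar K=-K/4\le 0$, this is simply $j_1=j_0$. For $x\in M$ consider the exponential map $\exp_x$ restricted to the ball of radius $j_0$ in $T_xM$. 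Since $j_0$ is at most the conjugate radius, it is a local diffeomorphism there, so the pullback metric $\tilde g_x=\exp_x^*g$ makes the tangent ball $\tilde B_x$ a Riemannian manifold. In it, radial lines from $0$ are minimizing geodesics, so the injectivity radius at $0$ is at least $j_0$ (in the incomplete sense the paper's conjugate-radius corollary, Corollary \ref{cor vol. comp.}, handles). The curvature condition $\Rick{2}\ge -K$ is local, so it lifts.

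Next I need a two-sided volume estimate for small balls, uniform in $x$. For the upper bound I use the Theorem (or Corollary \ref{cor vol. comp.}) on the lifted ball: $\Vol_{\tilde g}(\tilde B(0,s))\le V_{\bar K}(s)$ for $s<j_0$. For the packing I also need a lower bound, and this is the step I expect to be the main obstacle, since Bishop--Gromov is not available. My first attempt uses the monotonicity statement of the Theorem: the ratio $(V-V_{\bar K})/V_{\bar K}'$ is non-increasing. Its value near $0$ tends to $0$ because $V\sim V_{\bar K}\sim \tfrac{4\pi}{3}s^3$, which gives only the upper bound. So I expect to need a separate lower bound. Two options:
\begin{itemize}
\item Use the conjugate-radius-based estimate that small balls in $\tilde B_x$ contain a definite volume. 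A natural route is to compare two different centers inside the same lifted chart, with $\tilde B(y,\epsilon)\subset\tilde B(0,2\epsilon)$, and deduce a doubling-type estimate from the paper's volume doubling corollary.
\item Use the standard Cheeger--Gromov--Taylor-type lifting of packing counts.
\end{itemize}
Concretely, I will apply the volume doubling estimate (a corollary stated in the paper, derived from the Theorem) in the lifted charts. It gives $\Vol(\tilde B(y,2s))\le C(K,j_0)\Vol(\tilde B(y,s))$ for $s\le j_0/4$ and $\tilde y$ near $0$.

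With doubling in hand, the packing count goes as follows. First, the number of disjoint $\epsilon$-balls (with $\epsilon<j_0/4$) in a ball of radius $j_0/4$ of $M$ is bounded. I lift a maximal $\epsilon$-separated set $\{y_i\}\subset B(x,j_0/4)$ along minimal geodesics to points $\tilde y_i\in\tilde B(0,j_0/4)$. The balls $\tilde B(\tilde y_i,\epsilon/2)$ are disjoint, since $\exp_x$ is distance non-increasing and the images are disjoint. Each one satisfies $\Vol\tilde B(\tilde y_i,\epsilon/2)\ge C^{-k}\Vol\tilde B(\tilde y_i,j_0/2)\ge C^{-k}\Vol\tilde B(0,j_0/4)$, with $k\sim\log_2(j_0/\epsilon)$. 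Summing inside $\tilde B(0,j_0)$ gives $\#\{y_i\}\le C^{k}\cdot C'$.

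Finally, for general $r$, I chain: $B(p,r)$ is covered by $N(r)$ balls of radius $j_0/4$. This holds by induction on $r$ in steps of $j_0/8$, because a maximal $j_0/8$-net in the annulus step has each point within $j_0/4$ of the previous net, and the local count bounds the branching. This yields $N(r,\epsilon)$ and Gromov's criterion concludes the proof.
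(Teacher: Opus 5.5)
\textbf{Gap: the lower volume bound at lifted points other than the origin.} Your framework is the same as the paper's, which reruns the counting argument from the proof of Proposition~\ref{prop vol. growth}. That framework consists of Gromov's criterion, lifting via $\exp_x$ to a chart where the curvature condition persists and $\Inj_{\tilde g}(0)\ge j_0$, the upper bound from Corollary~\ref{cor vol. comp.}, disjointness of lifted balls from the $1$-Lipschitz property of $\exp_x$, and chaining. But the step you flagged is not actually closed.

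\begin{itemize}
\item The volume doubling corollary you want to apply at the lifted points $\tilde y_i$ has hypothesis $\inf_{z\in B(\tilde y_i,r)}\Inj(z)>2r$. The lifted chart controls the injectivity radius only at the origin. Nothing in your argument bounds $\Inj_{\tilde g}$ at other points of the chart, where geodesics issuing from $\tilde y_i$ could a priori meet again.
\item That corollary does not follow from Theorem~\ref{thm vol. comp.} alone, which, as you observed, yields only upper bounds. Its lower bound comes from Croke's inequality $V(p,r)\ge\frac{64}{27\pi}r^3$, which itself needs injectivity radius control on the whole ball.
\item The same unproved lower bound is hidden in your last step: the constant $C'$ requires a lower bound on $\Vol\tilde B(0,j_0/4)$.
\item Your alternative of a Cheeger--Gromov--Taylor-type argument would need relative volume comparison between different centers, i.e.\ Bishop--Gromov, which is not available under a $2$-Ricci bound.
\end{itemize}

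\textbf{What the paper does instead.} The missing ingredient is an injectivity radius estimate at points near the origin of the chart.
\begin{itemize}
\item Replace $\exp_x^*g$ by a complete metric $\tilde g$ on $T_xM$ that agrees with it on $B_{j_0/2}(0)$.
\item Then $\Inj_{\tilde g}(0)\ge j_0/2$. Also, a geodesic of length less than $3j_0/8$ starting in $B_{j_0/8}(0)$ stays in the region where $\tilde g$ is locally isometric to $g$, so $\conj_{\tilde g}\ge 3j_0/8$ there.
\item Xu's estimate $\Inj(v)\ge\min\{\Inj(0),\conj(v)\}-d(0,v)$ then gives $\Inj_{\tilde g}\ge j_0/4$ on $B_{j_0/8}(0)$.
\item With this, Croke's inequality applies directly at every lifted point. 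At the scale $\rho=\frac{1}{40(\sqrt K+j_0^{-1})}$ the local count is a single line: $|U_x|\cdot\frac{64}{27\pi}(\rho/2)^3\le V_{\bar K}(7\rho/2)$.
\end{itemize}
No doubling and no dyadic iteration are needed. The chaining then proceeds as you describe, using separation $\min\{\varepsilon,\rho\}$.
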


Note that the above pointed Gromov-Hausdorff limits allow collapsing. For example, the flat cylinder $\mathbb{R}^2\times\mathbb{R}/\varepsilon\mathbb{Z}$ equipped with the usual product metric has infinite conjugate radius with $\Rick{2}$ curvature vanishing for each $\varepsilon>0$   while their  pointed Gromov-Hausdorff limit is $\mathbb{R}^2$ as $\varepsilon\to0$.

In \cite{ST22JDG,ST21GT}, Simon and Topping study the local geometry of
        $3$-dimensional Riemannian manifolds with Ricci curvature lower bound
        and a coarse volume lower bound along Ricci flow.
In particular, they obtain the pseudo-locality result and short-time existence
        of Ricci flow in this case.
Furthermore, they prove the Anderson-Cheeger-Colding-Tian conjecture in this
        dimension, namely regularity of the Gromov-Hausdorff limits of complete
        pointed Riemannian manifolds in this case, via the Ricci flow they
        construct.
Meanwhile, they ask  whether or not Perelman's original pseudo-locality
        result can be extended in the analogous way to assume only some
        isoperimetric inequality rather than an almost-Euclidean one.
In this note, we will discuss the analogue of the above Simon-Topping's results for   $3$-dimensional Riemannian manifolds with
        spectral $2$-Ricci curvature lower bound and injectivity radius lower bound along Ricci flow.

Finally, we would like to point out the following interesting topological rigidity for manifolds with nonnegative scalar
        curvature, if one allows almost-Euclidean structure. The proof is just a simple observation from the short-time existence result of Ricci flow obtained by Cheng \cite{Cheng25AdvMath} and the diffeomorphism criterion via long-time Ricci flow by authors in this note \cite{HP26BLMS}, see also \cite{Wang20arXiv}.

\begin{corollary}
    For any $n\ge2$, there exists a constant $\delta(n)>0$ depending only on $n$
            such that every complete noncompact $n$-dimensional
            Riemannian manifold $(M^n,g)$ with nonnegative scalar curvature
            satisfying
    \[
        (\Area_g(\partial\Omega))^n
                \ge(1-\delta)n^n\omega_n(\Vol_g(\Omega))^{n-1}
    \]
    for any regular domain $\Omega\subset M$ is diffeomorphic to $\mathbb{R}^n$.
\end{corollary}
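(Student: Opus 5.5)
The plan is to follow the short-time existence result of Cheng \cite{Cheng25AdvMath} with the diffeomorphism criterion of \cite{HP26BLMS}, as indicated above. Cheng shows that a complete $(M^n,g)$ whose regular domains satisfy an almost-Euclidean isoperimetric inequality, with $\delta\le\delta_0(n)$ small, carries a complete Ricci flow $g(t)$ with $g(0)=g$ on $[0,T]$. The flow satisfies $|\Rm_{g(t)}|\le\varepsilon/t$ and the almost-Euclidean isoperimetric (hence volume and injectivity radius) control $\Inj_{g(t)}\ge c\sqrt t$. The key point is that the isoperimetric hypothesis is \emph{scale invariant}. Rescaling $g_\lambda=\lambda^{-2}g$ preserves both it and $R\ge0$. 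Applying Cheng's theorem to $g_\lambda$ and rescaling back therefore gives flows on $[0,\lambda^2T]$ with the same constants. Letting $\lambda\to\infty$, we either obtain an immortal flow directly, or pass to a limit of these flows using the uniform estimates $|\Rm|\le\varepsilon/t$ and Shi's derivative bounds.

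The first step is to record this immortal solution $g(t)$, $t\in[0,\infty)$, with $|\Rm_{g(t)}|\le \varepsilon(n,\delta)/t$ for all $t>0$, and with the isoperimetric condition holding along the flow. We use here that Cheng's estimates are derived for all times at which the flow exists, by a pseudo-locality argument at every scale.

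The second step is to use preservation of $R\ge0$ under Ricci flow, valid for complete flows with bounded curvature for $t>0$ via the maximum principle. This is not strictly needed, but it gives the nonnegative scalar curvature input if the criterion of \cite{HP26BLMS} requires it.

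The third step is to invoke the diffeomorphism criterion of \cite{HP26BLMS} (see also \cite{Wang20arXiv}). A complete noncompact manifold admitting an immortal Ricci flow with $|\Rm|\le\varepsilon/t$ and $\Inj_{g(t)}\ge c\sqrt t$ (equivalently, a uniform lower bound on the asymptotic volume ratio) is diffeomorphic to $\mathbb{R}^n$. Heuristically, the rescaled metrics $t^{-1}g(t)$ converge to a flat cone-like expanding structure, and each ball $B_{g(t)}(p,\sqrt t)$ is a smooth topological ball exhausting $M$. We then choose $\delta(n)$ at most the minimum of Cheng's threshold and the threshold from \cite{HP26BLMS}, so that $\varepsilon$ is small enough.

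The main obstacle is ensuring that the flow is genuinely immortal with scale-invariant estimates. If Cheng's result only gives existence on a time interval with a possible dependence on a curvature or injectivity scale, the rescaling/compactness argument must be done carefully. One has to check that the limit flows still start from the original $g$, and I would handle this via local smooth convergence away from $t=0$ together with distance distortion estimates near $t=0$. The case $n=2$ might need separate care, but by uniformization, and since the isoperimetric inequality rules out the cylinder and other non-planar surfaces, it should follow similarly.
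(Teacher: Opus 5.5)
\textbf{There is a genuine gap at the entry step.} Your large-scale architecture matches the paper's proof of Theorem \ref{thm entropy diff.}: Cheng's flows at scales $T_i\to\infty$, then a limiting immortal flow with $|\Rm|\le\alpha/t$ and $\Inj_{g(t)}\ge i_0\sqrt t$, then \cite[Theorem 1.1]{HP26BLMS}. However, the step that actually \emph{is} the paper's proof of this corollary is missing, and what you put in its place is false as you formulate it.

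The paper applies Cheng's existence theorem \cite[Theorem 4.3]{Cheng25AdvMath} under an almost-nonnegative $\nu$-entropy hypothesis, $\nu(M,g)\ge-\varepsilon$, not under an isoperimetric one. It converts the isoperimetric hypothesis into the entropy one with \cite[Lemma 3.5 (3.25)]{Wang18CambJ}. If $\Omega$ has isoperimetric constant $\mathbf{I}(\Omega)$ and $\mathcal{R}\ge-\Lambda$ on $\Omega$, then $\mu(\Omega,g,\tau)\ge\log\frac{\mathbf{I}(\Omega)^n}{n^n\omega_n}-\Lambda\tau$; compare its use in the proof of Theorem \ref{non-collapsing}. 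With $\mathcal{R}\ge0$ you may take $\Lambda=0$. The hypothesis then gives $\nu(\Omega,g,\tau)\ge\log(1-\delta)$ for every regular $\Omega$ and every $\tau>0$. Choosing $\delta:=1-e^{-\varepsilon}$ yields $\nu(M,g)\ge-\varepsilon$, and Theorem \ref{thm entropy diff.} finishes the proof.

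This is exactly where nonnegative scalar curvature is used. It is also what makes Cheng's theorem applicable at \emph{every} scale. With only $\mathcal{R}\ge-\Lambda$, the bound degrades like $-\Lambda\tau$, so you would get flows only up to times of order $\Lambda^{-1}$, never an immortal one.

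Consequently, your remark that $\mathcal{R}\ge0$ is ``not strictly needed'' is wrong. The version of Cheng's result you quote, with the isoperimetric condition alone plus rescaling to arbitrarily long time intervals, cannot hold. The hyperbolic space $\mathbb{H}^n$ satisfies the sharp Euclidean isoperimetric inequality ($\delta=0$), so your argument would produce an immortal flow from it with $t|\Rm|\le\varepsilon$. But its Ricci flow is the expanding solution $(1+2(n-1)t)g_{\mathbb{H}^n}$, for which $t|\Rm|\to\sqrt{n/(2(n-1))}$, which is not small. This mirrors the scalar curvature lower bound required in Perelman's pseudolocality theorem.

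Two smaller points:
\begin{itemize}
\item You do not need, and have not justified, persistence of the isoperimetric inequality along the flow. The paper obtains $\Inj_{g(t)}\ge i_0\sqrt t$ from Cheng's volume-ratio bound $\VolB_{g(t)}(x,\rho)\ge(1-\alpha)\omega_n\rho^n$ together with $|\Rm|\le\alpha/t$, via \cite[Theorem 4.7]{CGT82JDG}.
\item The case $n=2$ needs no separate uniformization argument.
\end{itemize}
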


Throughout the paper, all manifolds are assumed to be smooth and connected, all Riemannian metrics are smooth, and all solutions to the Ricci flow are smooth solutions. All results in this paper were conceived and proved by the authors. ChatGPT-5.6-Sol was used to search the historical literature and to assist in checking all proofs for errors.

\section{Comparison geometry}\label{comap.}

In this section, we study Bishop-Gromov type volume comparison and pointed Gromov-Hausdorff
        precompactness for  $3$-manifolds with $2$-Ricci curvature
        lower bound.

Let $(M^n,g)$ be an $n$-dimensional Riemannian manifold and $\Lambda_1\le\Lambda_2\le\cdots
        \le\Lambda_n$ be the eigenvalues of Ricci operator.
For notational convenience, we denote by $\Rick{k}_g$ the sum of the
        $k$ smallest eigenvalues of Ricci operator, namely
        $\Rick{k}_g=\sum_{i=1}^k\Lambda_i$.
We may drop the subscript $g$ when there is no ambiguity.
For example, we write $\Rick{2}\ge0$ to mean $2$-nonnegative Ricci curvature.


We first point out that one can readily obtain the following volume comparison, which holds for any dimension $n\ge3$,  by further assuming an
        upper bound of scalar curvature, since this implies a lower bound of
        Ricci curvature.

\begin{proposition}
    For any $n\ge3$ and $A>0$, there exists a constant $C(n,A)>0$ with
            the following property.
    Let $(M^n,g)$ be a Riemannian manifold such that $\Rick{2}\ge-K_1$
            and $\mathcal{R}\le K_2$ for some $K_1,K_2\ge0$.
    Then for any $p\in M$ and $0<r\le A(K_2+(n-1)K_1)^{-1/2}$
            with $B_g(p,r)\subset\subset M$, we have
    \[
        \VolB_g(p,r)\le Cr^n.
    \]
\end{proposition}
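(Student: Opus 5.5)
The plan is to turn the two hypotheses into a two-sided bound on the Ricci eigenvalues, and then apply the classical Bishop--Gromov comparison.

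\emph{Step 1: an upper bound on each eigenvalue.} Let $\Lambda_1\le\cdots\le\Lambda_n$ be the Ricci eigenvalues at a point. For any index $i$, choose $j\ne i$ with $j\le 2$; this is possible since $n\ge3$. If $i\le2$, the pair $\{1,2\}$ contains $i$. Removing $\Lambda_i$ and one more eigenvalue $\Lambda_j$ leaves $n-2$ eigenvalues. These can be grouped into pairs (plus possibly a single eigenvalue), and each group is bounded below by means of $\Rick{2}\ge-K_1$.

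A cleaner bookkeeping is as follows. Since $\Lambda_1+\Lambda_2\ge -K_1$ and the $\Lambda$'s are ordered, we have $\Lambda_k\ge\Lambda_2\ge -K_1/2$ for every $k\ge2$. For $\Lambda_1$ we only know $\Lambda_1\ge -K_1-\Lambda_2$.

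Now fix $i$. Then
\[
\Lambda_i=\mathcal R-\sum_{k\ne i}\Lambda_k .
\]
If $i\ge2$, this sum contains $\Lambda_1+\Lambda_2$ when $i\neq2$; when $i=2$ it contains $\Lambda_1+\Lambda_3\ge\Lambda_1+\Lambda_2$. In either case the sum is at least $-K_1$. The remaining $n-3$ terms are each at least $-K_1/2$. Hence
\[
\Lambda_i\le K_2+K_1+\tfrac{n-3}{2}K_1\le K_2+(n-1)K_1 .
\]
If $i=1$, then $\Lambda_1\le\Lambda_2$, so the same bound holds. Thus $\Lambda_n\le K_2+(n-1)K_1$.

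\emph{Step 2: a lower Ricci bound.} We have
\[
\Lambda_1\ge -K_1-\Lambda_2\ge -K_1-\Lambda_n\ge -\big(K_2+nK_1\big)=: -(n-1)\kappa ,
\]
so $\Ric\ge-(n-1)\kappa\, g$ with $\kappa\le C_0(n)\,(K_2+(n-1)K_1)$. The constants here are crude, and only dimensional factors matter.

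\emph{Step 3: Bishop--Gromov.} On $B_g(p,r)\subset\subset M$, the classical Bishop comparison holds for incomplete manifolds as long as the ball is relatively compact. It is applied inside the segment domain, using exponential map polar coordinates up to the cut locus, and only needs geodesics from $p$ of length less than $r$ to exist, which relative compactness guarantees. It gives
\[
\VolB_g(p,r)\le V_{-\kappa}^n(r)=n\omega_n\int_0^r\Big(\tfrac{\sinh(\sqrt\kappa s)}{\sqrt\kappa}\Big)^{n-1}ds .
\]
With $\sqrt\kappa\, r\le \sqrt{C_0}A$, we bound $\sinh(x)\le x\,\sinh(a)/a$ for $x\le a:=\sqrt{C_0}A$. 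This gives $\VolB_g(p,r)\le \omega_n (\sinh a/a)^{n-1} r^n=:C(n,A)r^n$.

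The main care point is Step 1, the eigenvalue bookkeeping. Equally important is justifying Bishop's inequality without completeness. For that I would argue directly via the Laplacian comparison for $d_p$ along minimizing geodesics inside $B_g(p,r)$, which exist by relative compactness.
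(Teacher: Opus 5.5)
Your proof is correct and follows the paper's approach: turn $\Rick{2}\ge-K_1$ and $\mathcal R\le K_2$ into a Ricci lower bound $\Ric\ge -C(n)\,(K_2+(n-1)K_1)\,g$, then apply Bishop--Gromov (your justification of Bishop--Gromov on a relatively compact ball in a possibly incomplete manifold is a point the paper passes over). The only difference is bookkeeping: the paper gets the Ricci bound in one line by inserting $\Lambda_i\ge\Lambda_2\ge-(K_1+\Lambda_1)$ for $i\ge3$ into $K_2\ge\mathcal R$, which yields the sharper $\Lambda_1\ge-\frac{K_2+(n-1)K_1}{n-2}$, whereas your detour through an upper bound on $\Lambda_2$ gives $\Lambda_1\ge-(K_2+nK_1)$ and so only changes the dimensional constants.
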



\begin{proof}
    Let $\Lambda_1\le\Lambda_2\le\cdots\le\Lambda_n$ be the eigenvalues of
            $\Ric$ operator.
    Then $\Lambda_i\ge\Lambda_2\ge-(K_1+\Lambda_1),\;i=3,\ldots,n$.
    Therefore, we have
    \[
        K_2\ge\mathcal{R}
                =\Lambda_1+\sum_{i=2}^n\Lambda_i\ge-(n-2)\Lambda_1-(n-1)K_1,
    \]
    which implies $\Ric\ge-(n-1)\hat{K}g$, where
            $\hat{K}:=\frac{K_2+(n-1)K_1}{(n-1)(n-2)}$.
    By the Bishop-Gromov volume comparison, we have
    \begin{align*}
        \VolB_g(p,r)&\le \VolB_{-\hat{K}}(r)=n\omega_n\int_0^r\left(
                \frac{\sinh(\sqrt{\hat{K}}t)}{\sqrt{\hat{K}}}\right)^{n-1}dt \\
        &\le n\omega_n e^{(n-1)\sqrt{\hat{K}}r}\int_0^r t^{n-1}\,dt \\
        &\le \omega_n e^{\sqrt\frac{n-1}{n-2}A}r^n.
    \end{align*}
    This completes the proof.
\end{proof}


Next, we will show the main Bishop-Gromov type volume comparison in this note, namely monotonicity and rigidity for geodesic
        sphere area and ball volume for $3$-manifolds with $2$-Ricci curvature
        lower bound.

Since completeness is not assumed unless otherwise stated, we specify
    our conventions for the injectivity and conjugate radii.
For each $p\in M$, let $\Inj_g(p)$ denote the injectivity radius at $p$,
    defined as the supremum of all $\rho>0$ for which $\exp_p$ is
    defined on $B_\rho(0)\subset T_pM$ and is a diffeomorphism
    onto its image, see also \cite[pp. 165-166]{Lee18Book}.
Similarly, let $\conj_g(p)$ denote the conjugate radius at $p$,
    defined as the supremum of all $\rho>0$ for which $\exp_p$
    is defined and has nonsingular differential throughout $B_\rho(0)$.
Note that $\Inj_g(p)\le\conj_g(p)$ and that $B_g(p,R)\subset\subset M$
    whenever $0<R<\conj_g(p)$. As usual, we define \[\Inj_g(M):=\inf\limits_{p\in M}\Inj_g(p) \quad\text{and}\quad \conj_g(M):=\inf\limits_{p\in M}\conj_g(p).\]

Let $(M^3,g)$ be a $3$-dimensional Riemannian manifold with $\Rick{2}\ge-K$
        for some $K\ge0$.
For any $p\in M$, write
\[
    A_g(p,r):=\Area_g(\partial B_g(p,r))\quad\text{and}\quad
            V_g(p,r):=\VolB_g(p,r).
\]
    The area $\Area_g$ should be understood as the $2$-dimensional Hausdorff
            measure whenever $\partial B_g(p,r)$ is not smooth. The comparison model is the $3$-dimensional simply connected space form with
        constant sectional curvature $\bar{K}:=-K/4$.
Its sphere area and ball volume are given by
\begin{align*}
    &A_{\bar{K}}(r):=\begin{cases}
        \frac{8\pi}{K}\big(\cosh(\sqrt{K}r)-1\big), & K>0; \\
        4\pi r^2, & K=0,
    \end{cases} \\
    \text{and}\quad&V_{\bar{K}}(r):=\begin{cases}
        \frac{8\pi}{K^{3/2}}\big(\sinh(\sqrt{K}r)-\sqrt{K}r\big), & K>0; \\
        \frac{4}{3}\pi r^3, & K=0.
    \end{cases}
\end{align*}

\begin{theorem} \label{thm vol. comp.}
    Let $(M^3,g)$ be a $3$-dimensional Riemannian manifold with
            $\Rick{2}\ge-K$ for some $K\ge0$ and $p\in M$ with
            $\Inj_g(p)\ge R$ for some $R>0$.
    Then for any $0<r<R$, we have
    \[
        r\mapsto\frac{A_g(p,r)-A_{\bar{K}}(r)}{A_{\bar{K}}'(r)}
                \quad\text{and}\quad
                r\mapsto\frac{V_g(p,r)-V_{\bar{K}}(r)}{V_{\bar{K}}'(r)}
    \]
    are both non-increasing on $(0,R)$. In particular, we have
\[A_g(p,r)\le A_{\bar{K}}(r)\quad\text{and}\quad
                V_g(p,r)\le V_{\bar{K}}(r)
   \] for any $0<r<R$. 
    Moreover, either of the above inequalities becomes an equality for some
            $r_0\in(0,R)$ if and only if $B_g(p,r_0)$ is isometric to the
            geodesic ball of radius $r_0$ in the $3$-dimensional simply
            connected space form with constant sectional curvature $\bar{K}$.
\end{theorem}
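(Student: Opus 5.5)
The plan is to derive a differential inequality for the sphere area $A(r):=A_g(p,r)$ from the second variation of area, using the Gauss equation together with Gauss--Bonnet on the geodesic spheres. Since $0<r<R\le\Inj_g(p)$, the distance function $\rho=d_g(p,\cdot)$ is smooth on $B_g(p,R)\setminus\{p\}$. Hence each $\Sigma_r:=\partial B_g(p,r)$ is a smooth embedded $2$-sphere, and $A$ is smooth on $(0,R)$ with $A(r)=4\pi r^2+O(r^4)$ as $r\to0$.

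\textbf{Second variation.} Let $\nu=\nabla\rho$, and let $H$ and $h$ be the mean curvature and second fundamental form of $\Sigma_r$. The first variation gives $A'=\int_{\Sigma_r}H$, and the Riccati equation gives $\partial_r H=-|h|^2-\Ric(\nu,\nu)$. Therefore
\[
A''=\int_{\Sigma_r}\big(H^2-|h|^2-\Ric(\nu,\nu)\big).
\]
The traced Gauss equation reads $\mathcal R-2\Ric(\nu,\nu)=2K_{\Sigma}+|h|^2-H^2$.

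\textbf{Curvature input.} For an orthonormal frame $e_1,e_2$ of $T\Sigma_r$, the key point is that $\Ric(e_1,e_1)+\Ric(e_2,e_2)=\mathcal R-\Ric(\nu,\nu)\ge\Rick{2}\ge-K$. This holds because the trace of a symmetric operator over a $2$-plane dominates the sum of its two smallest eigenvalues. Combining this with the Gauss equation gives
\[
H^2-|h|^2-\Ric(\nu,\nu)=2K_\Sigma-(\mathcal R-\Ric(\nu,\nu))\le 2K_\Sigma+K .
\]
Gauss--Bonnet on the sphere $\Sigma_r$ then yields $A''\le 8\pi+KA$. A direct check shows the model satisfies $A_{\bar K}''=8\pi+KA_{\bar K}$ and $A_{\bar K}'''=KA_{\bar K}'$.

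\textbf{Monotonicity of the area ratio.} Set $f=A-A_{\bar K}$, so that $f''\le Kf$. Then
\[
\big(f'A_{\bar K}'-fA_{\bar K}''\big)'=A_{\bar K}'\,(f''-Kf)\le0 .
\]
The bracket tends to $0$ as $r\to0$ by the small-$r$ expansions, so $f'A_{\bar K}'-fA_{\bar K}''\le0$. This is exactly the statement that $(f/A_{\bar K}')'\le 0$, i.e. the first ratio is non-increasing. Since $f/A_{\bar K}'\to0$ as $r\to0$, we get $A\le A_{\bar K}$.

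\textbf{Volume.} Write $V-V_{\bar K}=\int_0^r f$ and $V_{\bar K}'=A_{\bar K}=\int_0^r A_{\bar K}'$ (using $V'=A$ and $A_{\bar K}(0)=0$). Since $f/A_{\bar K}'$ is non-increasing and $A_{\bar K}'>0$, the standard Gromov ratio lemma shows that $\int_0^r f\big/\int_0^r A_{\bar K}'$ is non-increasing. This is the second monotonicity, and its vanishing limit at $0$ gives $V\le V_{\bar K}$.

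\textbf{Rigidity: reduction.} The ``if'' direction is immediate. Conversely, by monotonicity and the vanishing limits, equality at $r_0$ in either inequality forces $f\equiv0$ on $(0,r_0]$. For the volume this follows because the ratio is $\le 0$ and $\int f\le 0$ with $f\le0$. Hence every inequality above is pointwise an equality on $B_g(p,r_0)\setminus\{p\}$. This gives $\mathcal R-\Ric(\nu,\nu)=-K=\Rick{2}$, so $T\Sigma_r$ is spanned by eigenvectors for $\Lambda_1,\Lambda_2$ and $\nu$ is an eigenvector for $\Lambda_3$. Moreover $A'=A_{\bar K}'$ for all $r\le r_0$.

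\textbf{Rigidity: the main obstacle.} The difficulty is upgrading this equality information to constant sectional curvature $\bar K=-K/4$. My first attempt is as follows. Write $H^2-|h|^2=\tfrac12H^2-|\mathring h|^2$ and compare the Riccati inequality $\partial_r H\le-\tfrac12H^2-\Ric(\nu,\nu)$ with the model. The aim is to show that the equality $\int H=A_{\bar K}'$ for all $r$, together with the exact $A''$ identity, forces $\mathring h\equiv0$ and $\Ric(\nu,\nu)\equiv-K/2$. With $\Lambda_1+\Lambda_2=-K$ and $\Lambda_3=-K/2$ one gets $\mathcal R=-3K/2$, and the Gauss equation then gives $K_\Sigma$ constant. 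The level sets are then umbilic with $H=A_{\bar K}'/A_{\bar K}$ and constant Gauss curvature. This makes $g=d\rho^2+\phi(\rho)^2g_{S^2}$ a warped product with $\phi$ the model warping function, and hence $g$ is isometric to the model ball. If the mean-curvature comparison does not directly yield umbilicity, I would fall back on pointwise eigen-structure. At $p$, the tangent planes of small spheres exhaust all $2$-planes, so the limit gives $\Ric_p=-\tfrac K2 g$. I would then try to propagate Einstein-type information along radial geodesics via the Jacobi equation.
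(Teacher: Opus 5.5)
Up to and including the rigidity reduction, your argument is the paper's. Second variation plus Gauss--Bonnet gives $A''\le 8\pi+KA$. The Wronskian with $A_{\bar K}'''=KA_{\bar K}'$ gives the area monotonicity, and the ratio lemma gives the volume monotonicity. Equality forces $\mathcal{R}-\Ric(\nu,\nu)=-K$ with $\nu$ a $\Lambda_3$-eigenvector.

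The gap is the step you yourself flag as the main obstacle, which you state only as an aim. The two facts you propose to use, $\int_{\Sigma_r}H=A_{\bar K}'$ and $A''=A_{\bar K}''$, are integral identities. Set $u:=H-H_{\bar K}$ and use $A_{\bar K}'=H_{\bar K}A_{\bar K}$ and $A_{\bar K}''=(\frac12H_{\bar K}^2+\frac K2)A_{\bar K}$. Then the two identities combine only to
\[
\frac12\int_{\Sigma_r}u^2=\int_{\Sigma_r}|\mathring h|^2+\int_{\Sigma_r}\Big(\Ric(\nu,\nu)+\frac K2\Big),
\]
which by itself forces nothing to vanish. Likewise, the Riccati inequality $\partial_rH\le-\frac12H^2-\Ric(\nu,\nu)$ cannot be compared with the model without a pointwise lower bound on $\Ric(\nu,\nu)$, and you never state one. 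Your fallback (Einstein at $p$, then ``propagate via the Jacobi equation'') is not an argument.

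The missing idea is one line from what you already proved. Since $\nu$ is a $\Lambda_3$-eigenvector and $\Lambda_1+\Lambda_2=-K$, you get $\Ric(\nu,\nu)=\Lambda_3\ge\Lambda_2\ge\frac12(\Lambda_1+\Lambda_2)=-\frac K2$. Hence $\partial_rH\le-\frac12H^2+\frac K2$ along every radial geodesic. ODE comparison with $H_{\bar K}$ (both behave like $2/r$ as $r\to0$) then gives $H\le H_{\bar K}$ pointwise. Next,
\[
\int_{\Sigma_r}(H_{\bar K}-H)=H_{\bar K}A-A'=H_{\bar K}A_{\bar K}-A_{\bar K}'=0
\]
forces $H\equiv H_{\bar K}$. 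So equality holds throughout the Riccati chain: $\mathring h\equiv0$ and $\Ric(\nu,\nu)\equiv-\frac K2$, i.e.\ the right-hand side of the identity above vanishes. This is exactly how the paper closes the argument.

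From there your warped-product finish is correct. Umbilic level sets with $H=H_{\bar K}(r)$ give $\partial_r g_r=H_{\bar K}g_r$ in polar coordinates, hence $g=dr^2+\phi(r)^2g_{S^2}$ with the model $\phi$. This is a slightly more direct conclusion than the paper's, which passes through $\Ric=-\frac K2g$ and the fact that $\Ric$ determines $\Rm$ in dimension $3$.
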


    


\begin{proof}
The idea of the proof is to consider the second radial derivatives of area of geodesic sphere and then to apply Gauss equation and the Gauss-Bonnet
            theorem. This argument goes back to Schoen and Yau's work \cite{SY79AnnMath} on the study of compact stable minimal surface in $3$-dimensional compact manifolds with positive scalar curvature, see also \cite{GL-IHES-83}.
    For every $0<r<R$, the sphere $\partial B_g(p,r)$ is a smooth embedded
            surface in $M$.
    Let $h$ denote the second fundamental form of $\partial B_g(p,r)$ with
            respect to the outward unit normal vector field $\partial_r$
            and $H:=\tr h$ be the mean curvature.
    For any $x\in\partial B_g(p,r)$, choose an orthonormal basis
            $\{e_1,e_2,e_3\}$ of $T_xM$ such that $e_3=\partial_r$.
    Denote by $K_x$ the Gauss curvature of $\partial B_g(p,r)$ at $x$.
    Then by the Gauss equation, we have
    \begin{equation}\begin{aligned} \label{eqn Gauss K}
        K_x&=R_{1212}+\det h \\
        &=\frac{1}{2}(R_{11}+R_{22}-R_{33})+\frac{1}{2}(H^2-|h|^2).
    \end{aligned}\end{equation}
    Direct computation (see also \cite{Li12Book, Gimeno21BLMS})
            shows that
    \begin{equation}\begin{aligned} \label{eqn ddr2 A}
        A_g''(p,r)&=\dddnth{r}{2}\int_{\partial B_g(p,r)}d\sigma
                =\dddfir{r}\int_{\partial B_g(p,r)}H\,d\sigma \\
        &=\int_{\partial B_g(p,r)}\partial_rH+H^2\,d\sigma \\
        &=\int_{\partial B_g(p,r)}-R_{33}-|h|^2+H^2\,d\sigma \\
        &=\int_{\partial B_g(p,r)}2K_x-(R_{11}+R_{22})\,d\sigma \\
        &\le2\int_{\partial B_g(p,r)}K_x\,d\sigma+K A_g(p,r) \\
        &=8\pi+K A_g(p,r),
    \end{aligned}\end{equation}
    where we have used $\partial_rH=-\Ric(\partial_r,\partial_r)-|h|^2$ (see
            \cite[(1.137)]{CLN06Book}), \eqref{eqn Gauss K} and the Gauss-Bonnet
            theorem.
    Clearly, it holds that
    \[
        A_{\bar{K}}''(r)=8\pi+K A_{\bar{K}}(r)\quad\text{and hence}\quad
                A_{\bar{K}}'''(r)=K A_{\bar{K}}'(r).
    \]

    Now we are ready to prove the monotonicity of area comparison.
Write
    \begin{align*}
        &D_A(r):=A_{\bar{K}}(r)-A_g(p,r), \\
        \text{and}\quad&W_A(r):=D_A'(r)A_{\bar{K}}'(r)-D_A(r)A_{\bar{K}}''(r).
    \end{align*}
    Since $A_{\bar{K}}'(r)>0$, we have
    \begin{align*}
        W_A'(r)&=D_A''(r)A_{\bar{K}}'(r)-D_A(r)A_{\bar{K}}'''(r) \\
        &=A_{\bar{K}}'(r)(D_A''(r)-K D_A(r))\ge0.
    \end{align*}
    Recalling that $\lim\limits_{r\to0^+}D_A(r)=0$ and $\lim\limits_{r\to0^+}D'_A(r)=0$, we have  $\lim\limits_{r\to0^+}W_A(r)=0$
            and hence $W_A(r)\ge0$.
    Therefore, we obtain
    \[
        \dddfir{r}\left(\frac{A_{\bar{K}}(r)-A_g(p,r)}{A_{\bar{K}}'(r)}\right)
                =\frac{W_A(r)}{A_{\bar{K}}'(r)^2}\ge0
    \]
    for all $r\in(0,R)$. Recalling that $\lim\limits_{r\to0^+}\frac{D_A(r)}{A'_{\bar{K}}(r)}=0$, we have $A_g(p,r)\le A_{\bar{K}}(r)$.
    This completes the proof of the area comparison.

    Since $V_g'(p,r)=A_g(p,r)$ and $V_{\bar{K}}'(r)=A_{\bar{K}}(r)$, we have
    \begin{align*}
        &\dddfir{r}\left(\frac{V_{\bar{K}}(r)-V_g(p,r)}{V_{\bar{K}}'(r)}\right)
                =\frac{D_A(r)A_{\bar{K}}(r)-(V_{\bar{K}}(r)-V_g(p,r))
                A_{\bar{K}}'(r)}{A_{\bar{K}}(r)^2} \\
        &\qquad=\frac{A_{\bar{K}}'(r)}{A_{\bar{K}}(r)^2}\left(
                \frac{D_A(r)}{A_{\bar{K}}'(r)}\int_0^rA_{\bar{K}}'(s)\,ds
                -\int_0^rD_A(s)\,ds\right) \\
        &\qquad=\frac{A_{\bar{K}}'(r)}{A_{\bar{K}}(r)^2}\int_0^rA_{\bar{K}}'(s)
                \left(\frac{D_A(r)}{A_{\bar{K}}'(r)}
                -\frac{D_A(s)}{A_{\bar{K}}'(s)}\right)ds\ge0,
    \end{align*}
    where we have used the monotonicity of area comparison
            in the last inequality. $V_g(p,r)\le V_{\bar{K}}(r)$ follows from the fact that $\lim\limits_{r\to0^+}\frac{V_{\bar{K}}(r)-V_g(p,r)}{A_{\bar{K}}(r)}=0$.

    It remains to show the rigidity of the comparison. The implication from isometry to equality is straightforward and we only
            show the converse.
    If $A_g(p,r_0)=A_{\bar{K}}(r_0)$ or $V_g(p,r_0)=V_{\bar{K}}(r_0)$ for some
            $r_0\in(0,R)$, then by the monotonicity results, we have
            $A_g(p,r)=A_{\bar{K}}(r)$ for all $r\in(0,r_0]$.
    So the inequality in \eqref{eqn ddr2 A} becomes an equality for all
            $r\in(0,r_0]$, which means that $R_{11}+R_{22}=-K$ and
            hence $\Ric(\partial_r,\partial_r)\ge-K/2$ for all $r\in(0,r_0]$.
    Then
    \begin{equation} \label{eqn ddr H}
        \partial_rH=-\Ric(\partial_r,\partial_r)-|h|^2
                \le-\frac{H^2}{2}+\frac{K}{2}.
    \end{equation}
    A similar argument as in the proof of \cite[Lemma 1.126]{CLN06Book} shows
            that $H\le H_{\bar{K}}$ for all $r\in(0,r_0]$, where $H_{\bar{K}}$
            is the mean curvature of the geodesic sphere of radius $r$ in the
            $3$-dimensional simply connected space form with constant sectional
            curvature $\bar{K}$.
    Then
    \begin{align*}
        0&\le\int_{\partial B_g(p,r)}H_{\bar{K}}-H\,d\sigma \\
        &=A_g(p,r)H_{\bar{K}}(r)-A_g'(p,r) \\
        &=A_{\bar{K}}(r)H_{\bar{K}}(r)-A_{\bar{K}}'(r)=0.
    \end{align*}
    So $H\equiv H_{\bar{K}}$ and equality holds in \eqref{eqn ddr H}.
    It follows that $\Ric(\partial_r,\partial_r)=-K/2$, which forces
            $\Ric=-(K/2)g$ on $B_g(p,r_0)$.
    Note that the Riemann curvature tensor is determined by the Ricci tensor
            in dimension $3$.
    This completes the proof.
\end{proof}


\begin{corollary} \label{cor vol. comp.}
    Let $(M^3,g)$ be a $3$-dimensional Riemannian manifold with
            $\Rick{2}\ge-K$ for some $K\ge0$ and $p\in M$ with
            $\conj_g(p)\ge R$ for some $R>0$.
    Then for any $0<r<R$, we have
    \[
        A_g(p,r)\le A_{\bar{K}}(r)\quad\text{and}\quad
                V_g(p,r)\le V_{\bar{K}}(r).
    \]
    Moreover, the rigidity statement of Theorem \ref{thm vol. comp.}
            remains valid.
\end{corollary}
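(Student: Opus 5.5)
The plan is to reduce to Theorem \ref{thm vol. comp.} by lifting to the tangent space: injectivity radius is replaced by conjugate radius through pulling back the metric via the exponential map. Since $\conj_g(p)\ge R$, the map $\exp_p$ is defined on $B_R(0)\subset T_pM$ and is a local diffeomorphism there. I will set $\tilde g:=\exp_p^*g$ on $\tilde M:=B_R(0)$. Then $(\tilde M,\tilde g)$ is a smooth $3$-dimensional Riemannian manifold. It is locally isometric to $(M,g)$, so $\Rick{2}_{\tilde g}\ge-K$ still holds.

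Radial lines from $0$ are $\tilde g$-geodesics, and the Gauss lemma holds for $\tilde g$. Hence the $\tilde g$-distance from $0$ to $v$ is exactly $|v|$. This gives $B_{\tilde g}(0,r)=B_r(0)$, and $\exp_0^{\tilde g}$ is the identity on $B_R(0)$, so $\Inj_{\tilde g}(0)\ge R$ in the paper's sense. Applying Theorem \ref{thm vol. comp.} at $0$ then gives $A_{\tilde g}(0,r)\le A_{\bar K}(r)$ and $V_{\tilde g}(0,r)\le V_{\bar K}(r)$ for $0<r<R$, together with monotonicity and rigidity for $\tilde g$.

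Next I compare $g$ with $\tilde g$. Every point of $B_g(p,r)$ is joined to $p$ by a minimizing geodesic; this uses $B_g(p,R)\subset\subset M$ and the standard Hopf--Rinow-type argument inside the compactly contained ball. So $\exp_p\colon B_r(0)\to B_g(p,r)$ is surjective. It is a local isometry from $\tilde g$ to $g$, so the area formula gives $V_g(p,r)\le \Vol_{\tilde g}(B_r(0))=V_{\tilde g}(0,r)$. Similarly, $\partial B_g(p,r)$ is contained in $\exp_p(\partial B_r(0))$, because points at distance exactly $r$ are reached by minimizing geodesics of length $r$. Since $\exp_p$ is $1$-Lipschitz from $(\partial B_r(0),\tilde g)$ (a local isometry), $\mathcal H^2$ does not increase, and $A_g(p,r)\le A_{\tilde g}(0,r)$. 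Combining these with the previous step proves both inequalities.

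For rigidity, suppose equality holds for $g$ at some $r_0$. Then all the inequalities in the chain are equalities, and in particular $A_{\tilde g}(0,r_0)=A_{\bar K}(r_0)$ or $V_{\tilde g}(0,r_0)=V_{\bar K}(r_0)$. Theorem \ref{thm vol. comp.} then makes $(B_{r_0}(0),\tilde g)$ isometric to the model ball. It remains to show that $\exp_p$ is injective on $B_{r_0}(0)$, which I expect to be the main obstacle. The plan is as follows. Equality $V_g(p,r_0)=V_{\tilde g}(0,r_0)$ in the area formula forces $\exp_p$ to be injective off a null set. An open local diffeomorphism that is almost-everywhere injective is injective: if $\exp_p(v_1)=\exp_p(v_2)$ with $v_1\ne v_2$, disjoint neighborhoods of $v_1$ and $v_2$ map onto overlapping open sets, which gives positive-measure non-injectivity. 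In the area case, equality together with $\partial B_g(p,r_0)\subset\exp_p(\partial B_{r_0}(0))$ and the monotonicity for $\tilde g$ yields equality for all smaller $r$; integrating in $r$ then gives volume equality and reduces to the volume case. Once injectivity is established, $\exp_p$ is an isometry from $(B_{r_0}(0),\tilde g)$ onto $B_g(p,r_0)$, and the converse direction is immediate.
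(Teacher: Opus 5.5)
Your proof of the two inequalities is essentially the paper's argument: pull back to $\tilde g=\exp_p^*g$, note $\Inj_{\tilde g}(0)\ge R$, and use that $\exp_p$ is $1$-Lipschitz. Getting the volume bound from the area formula instead of integrating the area bound is fine. Your volume-case rigidity is also correct: multiplicity one a.e.\ in the area formula, plus openness of the local diffeomorphism $\exp_p$, gives injectivity. That is a clean alternative to the paper's argument with $F(v)=|v|-d_g(p,\exp_p(v))$.

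The gap is the area case of the forward direction. You claim that $A_g(p,r_0)=A_{\bar K}(r_0)$, together with $\partial B_g(p,r_0)\subset\exp_p(\partial B_{r_0}(0))$ and monotonicity for $\tilde g$, gives $A_g(p,r)=A_{\bar K}(r)$ for $r<r_0$. This does not follow. Monotonicity for $\tilde g$ only gives $A_{\tilde g}(0,r)=A_{\bar K}(r)$, and at radius $r$ you only know $A_g(p,r)\le A_{\tilde g}(0,r)$. Nothing you state ties the $g$-sphere of radius $r$ to the one of radius $r_0$, and $g$ itself has no monotonicity, since it has only a conjugate radius bound. The missing ingredient is the structure of minimizing radial geodesics, which is what the paper uses. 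Let $O$ be the set of $v\in\partial B_{r_0}(0)$ with $d_g(p,\exp_p(v))<r_0$. It is open, and $\partial B_g(p,r_0)\subset\exp_p(\partial B_{r_0}(0)\setminus O)$, so $A_g(p,r_0)\le A_{\tilde g}(0,r_0)-\Area_{\tilde g}(O)$; equality forces $O=\emptyset$. A radial geodesic that stops minimizing never minimizes again. Also, no geodesic minimizes past a point reached by a second minimizing geodesic of the same length. Together these show that $O=\emptyset$ gives injectivity of $\exp_p$ on $B_{r_0}(0)$ directly, with no detour through the volume case.

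Second, the converse is not immediate for the area. An isometry of $B_g(p,r_0)$ with the model ball gives $V_g(p,r_0)=V_{\bar K}(r_0)$ and $\Inj_g(p)\ge r_0$. However, $\partial B_g(p,r_0)$ lies outside that open ball, and $\exp_p$ need not be injective on $\partial B_{r_0}(0)$. For example, with $K=0$, in the flat torus $\mathbb{R}^3/(2r_0\mathbb{Z})^3$ the map $\exp_p$ identifies $r_0e_i$ with $-r_0e_i$. You still have to show that the points of $\partial B_g(p,r_0)$ with two preimages form an $\mathcal{H}^2$-null set. The paper does this by showing that the two image sheets through such a point have opposite normals. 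Being strictly convex, these sheets then meet only at isolated points, so there are only finitely many such points.
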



\begin{proof}
For any fixed $r\in(0,R)$, we can choose $\bar{R}\in(r,R)$. Consider the pullback metric $\tilde{g}:=\exp_p^*g$ on
            $B_{\bar{R}}(0)\subset T_pM$.
    Then we have $\Inj_{\tilde{g}}(0)\ge\bar{R}$.
    The curvature hypothesis passes to $\tilde{g}$, so Theorem
            \ref{thm vol. comp.} applies. For each $0<\rho\le r$, let
    \[
        C_\rho:=\{v\in\partial B_\rho(0)\mid d_g(p,\exp_p(v))=\rho\}.
    \]
    Since $\exp_p$ is $1$-Lipschitz  in the sense that 
    \[
        d_g(\exp_p(v),\exp_p(w))\le d_{\tilde{g}}(v,w)\quad\text{for all}\quad
                v,w\in B_{\bar{R}}(0),
    \]
    and maps $C_\rho$ onto $\partial B_g(p,\rho)$, by
            \cite[Chapter 2, \S1, (1.8)]{Simon18Notes}, we have
    \[
        A_g(p,\rho)\le \Area_{\tilde{g}}(C_\rho)\le A_{\tilde{g}}(0,\rho)
                \le A_{\bar{K}}(\rho).
    \] Integrating the area comparison yields the volume comparison.

    Suppose equality holds in either comparison at some $r_0\in(0,R)$.
    We claim that $\Inj_g(p)\ge r_0$.
    Indeed, for any $v\in B_{r_0}(0)$, define
    \[
        F(v):=|v|-d_g(p,\exp_p(v)).
    \]
    If $\Inj_g(p)<r_0$, then there exists $v_0\in B_{r_0}(0)$ such that
            $F(v_0)>0$.
    By continuity of $F$ and the star-shapedness of the minimizing domain,
            the nonminimizing vectors contain a nonempty open set,
            which forces both comparisons to be strict at $r_0$,
            which is a contradiction. Since $r_0<\conj_g(p)$, applying Theorem \ref{thm vol. comp.} to the
            pullback metric $\tilde{g}$ yields that $B_{\tilde{g}}(0,r_0)$
            is isometric to the model geodesic ball.
    Then the isometry of $B_g(p,r_0)$ follows from $\Inj_g(p)\ge r_0$.

    Conversely, suppose $B_g(p,r_0)$ is isometric to the model geodesic ball.
    Clearly, $V_g(p,r_0)=V_{\bar{K}}(r_0)$ and $\Inj_g(p)\ge r_0$.
    However, the injectivity on the open ball does not imply the injectivity on
            its boundary.
    Suppose $\exp_p$ maps two distinct points $q_1,q_2\in\partial B_{r_0}(0)$
            to $q\in\partial B_g(p,r_0)$.
    Choose disjoint neighborhoods of $q_1,q_2$ in $\partial B_{r_0}(0)$ and denote
            their image boundary sheets by $\Sigma_1$ and $\Sigma_2$, with
            outward unit normals $\nu_1$ and $\nu_2$.
    Then we must have $\nu_1(q)=-\nu_2(q)$ and hence every point in
            $\partial B_g(p,r_0)$ has at most two preimages on
            $\partial B_{r_0}(0)$.
    Since the outward second fundamental forms of $\Sigma_1$ and $\Sigma_2$ are positive
            definite, these two sheets meet only at $q$ locally.
    The compactness of $\partial B_g(p,r_0)$ and the local injectivity of
            $\exp_p$ imply that we can find a neighborhood of
            $q$ in $\partial B_g(p,r_0)$ in which there is
            no other point with two preimages.
    Therefore, there are only finitely many points in $\partial B_g(p,r_0)$
            having two preimages, which ensures that
            $A_g(p,r_0)=A_{\bar{K}}(r_0)$.
    This completes the proof.
\end{proof}


As a corollary, we first obtain the following volume doubling estimates.

\begin{corollary} \label{thm vol. doubling}
    Let $(M^3,g)$ be a $3$-dimensional  Riemannian manifold
            with $\Rick{2}\ge-K$ for some $K\ge0$ and $p\in M$.
    Suppose that $\inf_{x\in B_g(p,r)}\Inj_g(x)>2r$.
    Then there exists a constant $C>0$ depending only on an upper bound for
            $\sqrt{K}r$ such that
    \[
        V_g(p,2r)\le C V_g(p,r).
    \]
\end{corollary}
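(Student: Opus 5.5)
The plan is to bound the two sides of the doubling inequality separately: an upper bound for $V_g(p,2r)$ from Theorem \ref{thm vol. comp.}, and a curvature-free lower bound for $V_g(p,r)$ coming from the injectivity radius assumption. The ratio of the two bounds will be a function of $\sqrt{K}r$ alone.

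\emph{Upper bound.} Since $p\in B_g(p,r)$, the hypothesis gives $\Inj_g(p)>2r$. Hence Theorem \ref{thm vol. comp.} applies with any $R\in(2r,\Inj_g(p))$, and yields
\[
V_g(p,2r)\le V_{\bar K}(2r).
\]
For $K>0$ this equals $\frac{8\pi}{K^{3/2}}\big(\sinh(2\sqrt K r)-2\sqrt K r\big)$. Scaling out $r^3$, this is $r^3 f(\sqrt K r)$, where $f(t)=8\pi t^{-3}(\sinh 2t-2t)$ is increasing. For $K=0$ it is $\frac{32}{3}\pi r^3$, which agrees with the limit $t\to0$ of $r^3f(t)$. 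So if $\sqrt K r\le \Lambda$, we get $V_g(p,2r)\le f(\Lambda)\,r^3$.

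\emph{Lower bound.} The lower bound $V_g(p,r)\ge c_0 r^3$, with $c_0$ a universal constant, will come from Croke's local isoperimetric and volume estimate. That estimate uses no curvature assumption: if the injectivity radius is at least $2s$ at every point of $B_g(x,s)$, then $\VolB_g(x,s)\ge c(n)s^n$. Its mechanism is Berger–Kazdan/Santaló-type integral geometry along geodesic segments which are minimizing up to length $2s$.

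The hypothesis $\inf_{B_g(p,r)}\Inj_g>2r$ is exactly what is needed at scale $s=r$. Every geodesic starting in $B_g(p,r)$ is minimizing up to length $2r$, so Santaló's formula combined with the Berger–Kazdan inequality gives the bound. I expect this step to be the main obstacle, because Croke's result is usually stated with a global injectivity radius bound on a closed manifold. I would need to check that his proof only uses the injectivity radius at points of the ball and never requires geodesics to leave a compact region. Here $B_g(p,3r)\subset\subset M$ because $\conj_g\ge\Inj_g>2r$ along the relevant geodesics.

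\emph{Fallback.} If only a cruder version is available, I would use the lower bound $\VolB_g(x,s)\ge c\,s^3$ for $s\le \Inj_g(x)/2$. I would apply it at $x=p$ with $s=r$ (valid since $\Inj_g(p)>2r$) and use $B_g(p,r)$ directly.

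\emph{Conclusion.} Combining the two bounds gives
\[
V_g(p,2r)\le \frac{f(\Lambda)}{c_0}\,V_g(p,r).
\]
Thus $C=f(\Lambda)/c_0$ depends only on the upper bound $\Lambda$ for $\sqrt K r$.
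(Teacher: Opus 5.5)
Your main argument is correct and is essentially the paper's. The paper also combines Theorem \ref{thm vol. comp.} with Croke's bound $V_g(p,r)\ge\frac{64}{27\pi}r^3$ from \cite[Proposition 14]{Croke80AnnSci}. The only difference is that the paper uses the monotonicity of $(V_g-V_{\bar K})/V_{\bar K}'$ between $r$ and $2r$, namely $V_g(p,2r)\le V_{\bar K}(2r)-\frac{A_{\bar K}(2r)}{A_{\bar K}(r)}(V_{\bar K}(r)-V_g(p,r))$, rather than only $V_g(p,2r)\le V_{\bar K}(2r)$. This improves the limiting constant as $\sqrt K r\to0$ from your $9\pi^2/2$ to $4+9\pi^2/4$.

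The localization you flag works as you suggest, although the paper simply cites Croke without comment. Take $q\in B_g(p,s)$ with $s<r$. Then $d_g(q,\gamma(t))=t$ for $t<\Inj_g(q)$, so every geodesic from $q$ leaves $B_g(p,s)\subset B_g(q,2s)$ before time $2s$ and is minimizing until it does. Hence Croke's isoperimetric inequality holds on each such ball, and it integrates to the volume lower bound.

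You should, however, discard your fallback. The bound $\VolB_g(x,s)\ge cs^3$ for $s\le\Inj_g(x)/2$, using only the injectivity radius at the centre, is false. For a counterexample, take the smoothed cone $dt^2+f(t)^2g_{S^2}$ on $\mathbb{R}^3$ with $f(t)=t$ near $0$, $f$ concave, and $f(t)=at+b$ for large $t$. It has $\Ric\ge0$, hence $\Rick{2}\ge0$, and $\Inj_g(p)=\infty$ at the pole. Yet $\VolB_g(p,s)/s^3\to\frac{4\pi}{3}a^2$, which is arbitrarily small. So the injectivity radius bound on the whole ball $B_g(p,r)$ is genuinely needed, not just at $p$.
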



\begin{proof}
    We only show the case $K>0$ here, since the case $K=0$ is similar and simpler.
    By Theorem \ref{thm vol. comp.}, we have
    \[
        V_g(p,2r)\le V_{\bar{K}}(2r)-\frac{A_{\bar{K}}(2r)}{A_{\bar{K}}(r)}
                (V_{\bar{K}}(r)-V_g(p,r)).
    \]
    Since
    \[
        \frac{A_{\bar{K}}(2r)}{A_{\bar{K}}(r)}
                =\frac{\cosh(2\sqrt{K}r)-1}{\cosh(\sqrt{K}r)-1}
                =2\cosh(\sqrt{K}r)+2,
    \]
    and 
    \[
        V_g(p,r)\ge\frac{64}{27\pi}r^3\] by Croke's inequality \cite[Proposition 14]{Croke80AnnSci},  we have
    \begin{align*}
        \frac{V_g(p,2r)}{V_g(p,r)}&\le\frac{A_{\bar{K}}(2r)}{A_{\bar{K}}(r)}
                +\left(V_{\bar{K}}(2r)-\frac{A_{\bar{K}}(2r)}{A_{\bar{K}}(r)}
                V_{\bar{K}}(r)\right)\bigg/V_g(p,r) \\
        &\le2\cosh(\sqrt{K}r)+2+\frac{27\pi^2}{4}\cdot
                \frac{\sqrt{K}r\cosh(\sqrt{K}r)-\sinh(\sqrt{K}r)}{(\sqrt{K}r)^3} \\
        &=:f(\sqrt{K}r).
    \end{align*}
    Note that $f$ is increasing on $[0,\infty)$ with
            $\lim\limits_{x\to0^+}f(x)=4+9\pi^2/4$.
    This completes the proof.
\end{proof}


If we further assume the conjugate radius bound is uniform over the whole
        manifold, we can obtain the following large-scale volume comparison,
        which is a key to show the pointed Gromov-Hausdorff precompactness in
        this setting. We would like to first point out that a global positive  conjugate radius lower bound has already implied the completeness of the manifold, see \cite[Problem 6-12, pp. 188]{Lee18Book}.

\begin{proposition} \label{prop vol. growth}
    Let $(M^3,g)$ be a $3$-dimensional  Riemannian manifold with
            $\Rick{2}\ge-K$ for some $K\ge0$.
    Suppose $\conj_g(M)\ge j_0>0$.
    Let $L:=\min\{j_0,1/\sqrt{K}\}$.
    Then there exists a universal constant $C>0$ such that
            for any $p\in M$ and $R>0$,
    \[
        V_g(p,R)\le CL^3e^{\frac{C}{L}R}.
    \]
\end{proposition}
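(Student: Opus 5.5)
Since completeness follows from $\conj_g(M)\ge j_0$, I would use a packing/covering argument with balls of the fixed scale $L$, together with two local facts. Corollary \ref{cor vol. comp.} gives an upper bound for $V_g(x,\rho)$ at every point. A local lower bound on volume comes from lifting to the tangent space through $\exp_x$.

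\textbf{Step 1 (upper bound at scale $L$).} Set $\rho:=L/2<j_0$. For any $x\in M$, Corollary \ref{cor vol. comp.} gives $V_g(x,\rho)\le V_{\bar K}(\rho)$. Since $\sqrt K\rho\le 1/2$, this is at most $C_1L^3$ for a universal constant $C_1$.

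\textbf{Step 2 (growth via covering numbers).} Let $N(r)$ be the maximal number of disjoint balls $B_g(x_i,\rho/2)$ with $x_i\in B_g(p,r)$. By maximality, the balls $B_g(x_i,\rho)$ cover $B_g(p,r)$, so $V_g(p,r)\le C_1L^3N(r)$. It therefore suffices to show $N(R)\le Ce^{CR/L}$.

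The standard route is a doubling-type inequality $N(r+\rho)\le D\,N(r)$ with $D$ universal. Iterating it about $R/\rho$ times gives $N(R)\le D^{2R/L+1}$, which is the claimed form. To get the doubling inequality, I would show that every ball $B_g(y,2\rho)$ contains at most $D$ disjoint $\rho/2$-balls. The natural approach is to lift to $(B_{2\rho+\epsilon}(0),\tilde g=\exp_y^*g)$. This requires $2\rho<j_0$, so I would shrink to $\rho=L/4$. In the lifted metric, $\Inj_{\tilde g}(0)$ is large, so Theorem \ref{thm vol. comp.} gives $\Vol_{\tilde g}(B_{\tilde g}(0,2\rho))\le V_{\bar K}(2\rho)\le C L^3$.

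Each point $x_i\in B_g(y,\rho')$ has a lift $\tilde x_i$ along a minimizing geodesic from $y$. The lifted balls $B_{\tilde g}(\tilde x_i,\rho/4)$ are disjoint, because $\exp_y$ is $1$-Lipschitz and the images are disjoint. They also lie in $B_{\tilde g}(0,3\rho)$, which again has volume $\lesssim L^3$.

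\textbf{Step 3 (main obstacle: the lower bound).} Each lifted ball needs volume $\gtrsim L^3$. I would get this from Croke's inequality in the lifted space, as used in Corollary \ref{thm vol. doubling}. The difficulty is that the lifted ball around $\tilde x_i$ is not centered at the origin, so its injectivity radius is not directly controlled.

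To handle this, I would pull back once more at $\tilde x_i$ by $\exp_{x_i}$. Equivalently, I would compare volumes of the local lifts $(B_{\rho/4}(0)\subset T_{x_i}M,\exp_{x_i}^*g)$. These have injectivity radius at least $\rho/4$ at the origin. Croke's inequality, which only needs a convex/injectivity neighbourhood of size $\rho/4$ around the center, then gives volume $\ge c\rho^3$.

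Relating these to the lifted balls in $T_yM$ uses that both are local isometric lifts of a neighbourhood, via the lifting property of local isometries on simply connected balls. If this step is too delicate, I would fall back on a direct Rauch/Jacobi-field argument. The conjugate-radius bound gives a lower bound on $|d\exp|$ along radial directions in terms of $\Rick{2}$ only at the level of the Jacobian. Even then, the simplest robust choice is the Croke-based estimate.

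Counting volumes, $D\le CL^3/(cL^3)$, which is universal. Combining Steps 1–3 gives $V_g(p,R)\le CL^3e^{CR/L}$.
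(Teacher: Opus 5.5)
\textbf{Gap in Step 3.} Your bookkeeping in Steps 1--2 is fine. Step 3, however, is the crux of the proposition, and the re-lifting at $x_i$ does not produce the lower bound you need.

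Write $\tilde g=\exp_y^*g$ and $\hat g=\exp_{x_i}^*g$, and identify $T_{x_i}M$ with $T_{\tilde x_i}(T_yM)$ via $d\exp_y$. The natural comparison map is $w\mapsto\exp^{\tilde g}_{\tilde x_i}(w)$. It is a local isometry from $(B_{\rho/4}(0),\hat g)$ onto $B_{\tilde g}(\tilde x_i,\rho/4)$. By itself it only gives
$\Vol_{\tilde g}(B_{\tilde g}(\tilde x_i,\rho/4))\le\Vol_{\hat g}(B_{\rho/4}(0))$, which is the wrong direction.

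Transferring a lower bound requires this map to be injective, i.e.\ $\Inj_{\tilde g}(\tilde x_i)\ge\rho/4$. That is exactly the quantity you flagged as uncontrolled, so the argument is circular.

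\begin{itemize}
\item The lifting property does not rescue this. $\exp_{x_i}$ restricted to a ball is a local diffeomorphism out of an incomplete space, not a covering. So paths in $B_{\tilde g}(\tilde x_i,\rho/4)$ need not lift, and defining a lift radially presupposes uniqueness of geodesics from $\tilde x_i$.
\item Croke's inequality needs injectivity radius control at every point of the ball, not only at its centre. So even the estimate in $T_{x_i}M$ needs more than $\Inj_{\hat g}(0)\ge\rho/4$.
\item The Rauch/Jacobi fallback is not justified. Lower bounds on $|d\exp|$ via Rauch need an upper curvature bound. A $\Rick{2}$ lower bound only bounds the Jacobian from above, as in Theorem \ref{thm vol. comp.}, and the conjugate radius bound gives only positivity.
\end{itemize}

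\textbf{Missing ingredient.} You need a quantitative injectivity radius estimate at points near, but not at, the origin of the lifted space. The paper proceeds as follows.
\begin{itemize}
\item It replaces $\exp_y^*g$ by a complete metric $\tilde g$ on $T_yM$ that agrees with $\exp_y^*g$ on $B_{j_0/2}(0)$ and is Euclidean outside $B_{3j_0/4}(0)$.
\item It invokes Xu's estimate $\Inj_{\tilde g}(v)\ge\min\{\Inj_{\tilde g}(0),\conj_{\tilde g}(v)\}-d_{\tilde g}(0,v)$. This gives $\Inj_{\tilde g}\ge j_0/4$ on all of $B_{j_0/8}(0)$.
\item Croke's inequality then applies at every lifted centre directly in $T_yM$.
\end{itemize}

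This estimate degrades linearly in $|v|$, so it forces a much smaller packing scale than your $\rho=L/4$. At $\rho=L/4$ the lifted balls reach $|v|$ of order $j_0/2$, where the estimate gives nothing. The paper takes $\rho=\frac{1}{40(\sqrt K+j_0^{-1})}$, so that all lifted balls lie in $B_{7\rho/2}(0)\subset B_{j_0/8}(0)$.

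With that input in place, the rest of your argument goes through. The upper bound from Corollary \ref{cor vol. comp.} and your recursion $N(r+\rho)\le D\,N(r)$ do the same job as the paper's chain count with $|U_x|\le C_2$.
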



\begin{proof}
    Let $\rho:=\frac{1}{40(\sqrt{K}+j_0^{-1})}\in[\frac{L}{80},\frac{L}{40}]$.
    Then $\rho\le\frac{j_0}{40}$ and $\sqrt{K}\rho\le\frac{1}{40}$. Let $X\subset M$ be a maximal $\rho$-separated set
            (see \cite[Exercise 1.6.4]{BBI01Book}) and $Y:=X\cap B_g(p,R+\rho)$.
    By the definition of maximal $\rho$-separated set, we know the balls
            $\{B_g(x,\rho/2)\}_{x\in X}$ are disjoint and the balls
            $\{B_g(x,\rho)\}_{x\in X}$ cover $M$.
    For any $x\in B_g(p,R)$, there exists some $y\in X$ such that
            $d_g(x,y)<\rho$.
    Then $d_g(p,y)<R+\rho$ and hence $y\in Y$.
    So $B_g(p,R)\subset\bigcup_{y\in Y}B_g(y,\rho)$.
Since $\sqrt{K}\rho\le\frac{1}{40}$, for all $r\in(0,\frac{7}{2}\rho]$,
            there exists a universal constant $C_1>1$ such that
    \begin{equation} \label{eqn V barK}
        V_{\bar{K}}(r)\le\omega_3r^3e^{7\sqrt{K}\rho/2}\le C_1r^3.
    \end{equation}
Then by Corollary \ref{cor vol. comp.}, we have
    \[
        V_g(y,\rho)\le V_{\bar{K}}(\rho)\le C_1\rho^3.
    \]

    Denote by $|Y|$ the cardinality of $Y$.
    Then our goal is to estimate $|Y|$.
    Fix $x_0\in Y$ with $d_g(p,x_0)<\rho$.
    For any $y\in Y$, take a minimizing geodesic from $p$ to $y$ and choose
            consecutive subdivision points $p_0=p,p_1,\ldots,p_m=y$ such that
            $d_g(p_{i-1},p_i)\le\rho$ for $i=1,\ldots,m$.
    For every $p_i,\,i=1,\ldots,m-1$, we can find a $x_i\in X$ such that
            $d_g(p_i,x_i)<\rho$.
    Thus we find a chain of points $x_0,x_1,\ldots,x_m=y$ in $X$ with
    \[
        d_g(x_{i-1},x_i)\le d_g(x_{i-1},p_{i-1})+d_g(p_{i-1},p_i)+d_g(p_i,x_i)
                <3\rho,
    \]
    for $i=1,\ldots,m$.
    Since $d_g(p,y)<R+\rho$, we may choose $m:=\lceil(R+\rho)/\rho\rceil
            <R/\rho+2$.
    We claim that there exists some universal constant $C_2>0$ such that
            for all $x\in M$, the cardinality of the set $U_x:=\{z\in X\mid
            d_g(x,z)<3\rho\}$ is bounded by $C_2$.
    Then we have $|Y|\le C_2^{R/\rho+2}$ and hence
   \[
        V_g(p,R)\le C_2^{R/\rho+2}C_1\rho^3\le
                C_1C_2^2\rho^3e^{(\log C_2)R/\rho}.
    \]

    It remains to show the claim.
    Take a complete Riemannian metric $\tilde{g}$ on $T_xM$ agreeing with
            $\exp_x^*g$ on $B_{j_0/2}(0)$ and with the Euclidean metric outside
            $B_{3j_0/4}(0)$.
    Then $\Inj_{\tilde{g}}(0)\ge j_0/2$.
    For any $v\in B_{j_0/8}(0)$, by \cite[Theorem 1.2]{Xu18CommunContempMath},
            we have
    \begin{align*}
        \Inj_{\tilde{g}}(v)&\ge\min\{\Inj_{\tilde{g}}(0),\conj_{\tilde{g}}(v)\}
                -d_{\tilde{g}}(0,v) \\
        &\ge\left(\frac{j_0}{2}-\frac{j_0}{8}\right)-\frac{j_0}{8}
                \ge\frac{j_0}{4}.
    \end{align*}
    For each $z\in U_x$, choose a lift $v_z\in B_{3\rho}(0)$.
    These lifts are $\rho$-separated with respect to $\tilde{g}$.
    Then
    \[
        \bigsqcup_{z\in U_x}B_{\tilde{g}}(v_z,\frac{\rho}{2})
                \subset B_{7\rho/2}(0)\subset B_{j_0/8}(0).
    \]
    Applying Theorem \ref{thm vol. comp.} to the pullback metric $\exp^\ast_g$ on $B_{j_0/2}(0)$, and using \eqref{eqn V barK} and Croke's inequality
            \cite[Proposition 14]{Croke80AnnSci}, we obtain
    \[
        |U_x|\cdot\frac{64}{27\pi}(\frac{\rho}{2})^3
                \le V_{\tilde{g}}(0,\frac{7\rho}{2})
                \le V_{\bar{K}}(\frac{7\rho}{2})
                \le C_1\left(\frac{7\rho}{2}\right)^3.
    \]
    This completes the proof by taking $C_2:=\frac{27\pi}{8}C_1(\frac{7}{2})^3$.
\end{proof}

\begin{corollary} \label{cor pGH compact.}
    The set of $3$-dimensional pointed complete Riemannian manifolds
            with $\Rick{2}\ge-K$ for some fixed $K\ge0$ and
            conjugate radius uniformly bounded from below by $j_0>0$
            is precompact in the pointed Gromov-Hausdorff topology.
\end{corollary}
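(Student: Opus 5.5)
We will use Gromov's precompactness criterion: a class of pointed complete length spaces is precompact in the pointed Gromov--Hausdorff topology provided that, for every $R>0$ and $\varepsilon>0$, there is $N(R,\varepsilon)$ such that every ball $B_g(p,R)$ in any member of the class can be covered by at most $N(R,\varepsilon)$ balls of radius $\varepsilon$ (see \cite{BBI01Book}). Each manifold in the class is complete by the remark before Proposition \ref{prop vol. growth}, since its conjugate radius is globally bounded below. Hence each is a complete length space, and the criterion applies. It therefore suffices to produce a uniform bound on the number of $\varepsilon$-separated points in $B_g(p,R)$. Indeed, a maximal $\varepsilon$-separated subset of $B_g(p,R)$ is an $\varepsilon$-net, so its cardinality is an admissible $N(R,\varepsilon)$.

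We may assume $\varepsilon\le\rho_0:=\frac{1}{40(\sqrt{K}+j_0^{-1})}$, since coverings at a smaller scale also serve for larger $\varepsilon$. Let $\{x_i\}$ be $\varepsilon$-separated in $B_g(p,R)$. The balls $B_g(x_i,\varepsilon/2)$ are then disjoint and contained in $B_g(p,R+\varepsilon)$.

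\emph{Upper bound.} Proposition \ref{prop vol. growth} gives
\[
\sum_i V_g(x_i,\varepsilon/2)\le V_g(p,R+\varepsilon)\le CL^3e^{\frac{C}{L}(R+\varepsilon)},
\]
where $L$ depends only on $K$ and $j_0$.

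\emph{Lower bound.} We need a uniform lower bound $V_g(x_i,\varepsilon/2)\ge c\,\varepsilon^3$. This is the main obstacle, because Croke's inequality requires an injectivity radius bound, not just a conjugate radius bound, and $\varepsilon/2$ balls in $M$ may be collapsed (the flat cylinder example shows that this genuinely happens). So a volume lower bound in $M$ itself is false, and the counting must be done upstairs.

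We therefore repeat the lifting trick from the proof of Proposition \ref{prop vol. growth}. Cover $B_g(p,R)$ by the balls $B_g(y,\rho_0)$ with $y\in Y$, where $Y$ is as in that proof and $|Y|\le C_2^{R/\rho_0+2}$ is uniformly bounded. It then suffices to bound the number of $\varepsilon$-separated points inside a single ball $B_g(y,\rho_0)$.

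Lift these points via $\exp_y$ to $v_i\in B_{\rho_0}(0)\subset T_yM$, using the metric $\tilde g$ constructed there. The lifts remain $\varepsilon$-separated for $\tilde g$, because $\exp_y$ is $1$-Lipschitz. Moreover $\Inj_{\tilde g}(v)\ge j_0/4$ on $B_{j_0/8}(0)$ by \cite[Theorem 1.2]{Xu18CommunContempMath}.

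Croke's inequality then gives $V_{\tilde g}(v_i,\varepsilon/2)\ge\frac{64}{27\pi}(\varepsilon/2)^3$. These balls are disjoint and lie in $B_{\tilde g}(0,2\rho_0)$. Theorem \ref{thm vol. comp.} applied at $0$ bounds the volume of that ball by $V_{\bar K}(2\rho_0)\le C_1(2\rho_0)^3$. Hence the number of separated points in $B_g(y,\rho_0)$ is at most $C(\rho_0/\varepsilon)^3$.

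Multiplying by $|Y|$ gives
\[
N(R,\varepsilon)\le C\,C_2^{R/\rho_0+2}\,(\rho_0/\varepsilon)^3,
\]
which depends only on $K$, $j_0$, $R$ and $\varepsilon$. Gromov's criterion then yields precompactness.
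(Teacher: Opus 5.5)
Your argument is correct and is essentially the paper's proof: the paper applies \cite[Theorem 8.1.10]{BBI01Book} with $\delta=\min\{\varepsilon,\rho\}$ and only asserts that an argument like the one for Proposition \ref{prop vol. growth} bounds the number of $\delta$-separated points in $B_g(p,R)$, and you carry this out explicitly by covering with the $|Y|$ balls of radius $\rho_0$ and counting separated points in each ball via the lift to $(T_yM,\tilde g)$, Croke's inequality and Theorem \ref{thm vol. comp.}. Your preliminary ``upper bound'' step via Proposition \ref{prop vol. growth} is never used in the final count and can be dropped.
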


\begin{proof}
    It suffices to verify the conditions \cite[Theorem 8.1.10]{BBI01Book}.
Fix any $R,\varepsilon>0$, take $\rho$ as in the proof of Proposition
            \ref{prop vol. growth} and set $\delta:=\min\{\varepsilon,\rho\}$.
    For any $(M,g,p)$ in the provided set, choose a maximal $\delta$-separated
            subset $Y\subset \overline{B_g(p,R)}$.
    An argument similar to  the proof of Proposition \ref{prop vol. growth}
            shows that $|Y|$ is uniformly bounded from above by a constant
            depending only on $K,j_0,R$ and $\varepsilon$.
    This completes the proof.
\end{proof}


\section{Some discussions related to Ricci
        flow}\label{RF}

As we mention in the introduction, Simon and Topping \cite{ST22JDG,ST21GT} study the local geometry of
        $3$-dimensional Riemannian manifolds with Ricci curvature lower bound
        and a coarse volume lower bound along Ricci flow. They obtain pseudo-locality result and short-time existence of Ricci flow in the above setting. Using their Ricci flow, they establish the regularity of the Gromov-Hausdorff limits of complete
        pointed Riemannian manifolds with uniform Ricci curvature lower bound
        and uniform  volume lower bound of unit balls. The mechanism of their proof contains two key ingredients. One is  the almost preservation of lower bound of Ricci
        curvature, another one is  the non-collapsing propagation along $3$-dimensional
        Ricci flow in a scale larger than curvature-scale. 

In this section, we will first study the preservation of lower bound of $2$-Ricci curvature along $3$-dimensional Ricci flow. We first point out that the preservation of $2$-nonnegative Ricci curvature along $3$-dimensional Ricci flow is well-known by experts.  In \cite[Proposition 6.2]{CX25JMPA}, Cao and Xie show that the $2$-nonnegative
        Ricci curvature is preserved under complete $3$-dimensional
        Ricci flow with bounded curvature.
In fact, one can remove the assumption on bounded curvature just by taking
        $k=2$ and $K_k=0$ in Chen's result
        \cite[Corollary 2.3 (ii)]{Chen09JDG}


\begin{proposition}
    Let $(M^3,g(t)),\,t\in[0,T]$ be a smooth complete solution to the Ricci flow.
    If $\Rick{2}_{g(0)}\ge0$, then $\Rick{2}_{g(t)}\ge0$
            for all $t\in[0,T]$.
\end{proposition}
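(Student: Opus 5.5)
The plan is to deduce the statement from Chen's result \cite[Corollary 2.3 (ii)]{Chen09JDG}, taking $k=2$ and $K_k=0$. Chen's corollary says that along a complete Ricci flow, with no curvature bound, a lower bound on the sum of the $k$ smallest eigenvalues of the Ricci operator is preserved in the form $\Rick{k}\ge-K_k$, up to the time-dependent correction given there. With $K_k=0$ that correction vanishes, so the conclusion is exactly $\Rick{2}_{g(t)}\ge0$. The work therefore consists of checking that the hypotheses and the curvature quantity in \cite{Chen09JDG} match ours.

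First, I will confirm that Chen's ``$k$-Ricci'' quantity is the sum of the $k$ smallest Ricci eigenvalues, as in our convention $\Rick{k}_g=\sum_{i=1}^k\Lambda_i$. If instead it is phrased via traces of $\Ric$ over $k$-planes, I will note the equivalence: the minimum over orthonormal $k$-frames $\{e_i\}$ of $\sum\Ric(e_i,e_i)$ equals $\sum_{i\le k}\Lambda_i$ by the Ky Fan principle. Second, I will check that the result applies in dimension $3$ with $k=2$ and needs only smoothness and completeness of each $g(t)$. It is presumably proved with Chen's localized maximum principle and cutoff functions built from $d_{g(t)}$, which is why no bounded-curvature hypothesis appears.

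For intuition, and as a backup if the citation's hypotheses are not met, I would record the ODE reason the condition is preserved in dimension $3$. In dimension $3$, $\Rm$ is determined by $\Ric$. Diagonalize the curvature operator with eigenvalues $\lambda\ge\mu\ge\nu$. Then the Ricci eigenvalues are sums of pairs, and $\Lambda_1+\Lambda_2=(\mu+\nu)+(\lambda+\nu)=\lambda+\mu+2\nu$. The Hamilton ODE $\lambda'=\lambda^2+\mu\nu$ (and cyclically) gives
\[
(\lambda+\mu+2\nu)'=\lambda^2+\mu^2+2\nu^2+\mu\nu+\lambda\nu+2\lambda\mu .
\]
On the boundary $\lambda+\mu=-2\nu$ this becomes $(\lambda-\mu)^2/2+\dots\ge0$. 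Indeed, substituting $\nu=-(\lambda+\mu)/2$ gives $\lambda^2+\mu^2+\tfrac12(\lambda+\mu)^2-\tfrac12(\lambda+\mu)^2+2\lambda\mu=(\lambda+\mu)^2\ge0$. The set $\{\Rick{2}\ge0\}$ is convex, since it is an infimum of linear functions, and $O(3)$-invariant, so it is preserved by Hamilton's maximum principle for compact flows or flows of bounded curvature.

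The main obstacle is removing the bounded-curvature assumption. I would handle it entirely by the citation to \cite{Chen09JDG}, which is designed to avoid that assumption. If a self-contained argument were needed, I would run Chen's localization: apply the maximum principle to $\Rick{2}$ times a cutoff in $d_{g(t)}(x_0,\cdot)$. Perelman's distance distortion estimate controls $(\partial_t-\Delta)d$ where $\Ric$ is bounded below, and the estimate is then iterated on small time intervals.
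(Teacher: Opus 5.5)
Your proposal is essentially the paper's proof, which consists only of invoking \cite[Corollary 2.3 (ii)]{Chen09JDG} with $k=2$ and $K_k=0$. One adjustment to your matching step: as the paper notes, Chen's hypothesis is phrased via the curvature-operator eigenvalues $m_1\le m_2\le m_3$ (for $k=2$ it is $2m_1+m_2+m_3\ge -K_k$), so the identification you need is exactly the identity $\Lambda_1+\Lambda_2=2m_1+m_2+m_3$ you derived in your backup paragraph, not a Ky Fan reformulation of a $k$-Ricci quantity.
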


\dummyline{0.5}

\begin{proof}
    This is exactly the special case of \cite[Corollary 2.3 (ii)]{Chen09JDG}
            with $k=2$ and $K_k=0$, since
            $\Rick{2}_{g(0)}=2m_1+m_2+m_3$.
    Here $m_1\le m_2\le m_3$ are the eigenvalues of the curvature operator.
\end{proof}


In order to obtain the pseudo-locality result or construct local Ricci flow by the techniques developed in
        \cite{ST22JDG,ST21GT}, we will investigate the local preservation of lower
        bound of $2$-Ricci curvature.

\begin{lemma} \label{lma pt. alge.}
    Let $(M^3,g(t))_{t\in[0,T)}$ be a $3$-dimensional Ricci flow and
            $\Lambda_1\le\Lambda_2\le\Lambda_3$ be the eigenvalues of the
            Ricci operator.
    Then $\Lambda_1+\Lambda_2$ is continuous on $M\times[0,T)$ and satisfies
    \[
        \Boxg{g(t)}(\Lambda_1+\Lambda_2)\ge\frac{7}{8}(\Lambda_1+\Lambda_2)^2
    \]
    on $M\times(0,T)$ in the barrier sense.
\end{lemma}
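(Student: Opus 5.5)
The plan is to apply Hamilton's maximum-principle machinery for the curvature operator in dimension $3$. The key facts are that $\Lambda_1+\Lambda_2$ is a concave function of the curvature operator, and that the reaction term of the curvature operator has an explicit form in an eigenframe.

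\emph{Setup and continuity.} Let $m_1\le m_2\le m_3$ be the eigenvalues of the curvature operator $\mathcal{M}$. Normalize so that the Ricci eigenvalues are sums of pairs of the $m_i$, as in the proof of the preceding Proposition, where $\Rick{2}=2m_1+m_2+m_3$. Continuity of $\Lambda_1+\Lambda_2$ on $M\times[0,T)$ is immediate, since ordered eigenvalues of a continuous symmetric tensor are continuous (Weyl's inequality).

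\emph{Barrier construction.} I will use the Uhlenbeck trick, viewing $\mathcal{M}$ as a section of a fixed bundle with time-independent fibre metric. This gives
\[
\Boxg{g(t)}\mathcal{M}=\mathcal{M}^2+\mathcal{M}^\#.
\]
Fix $(x_0,t_0)$ and an eigenframe $\{e_1,e_2,e_3\}$ of $\mathcal{M}(x_0,t_0)$ ordered by $m_1\le m_2\le m_3$. Extend it to a neighborhood by parallel transport along radial geodesics in space, and constantly in time in the Uhlenbeck bundle. Then $\nabla e_i=0$ and $\Delta e_i=0$ at $(x_0,t_0)$, and $\partial_t e_i=0$. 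Define
\[
\varphi:=2\mathcal{M}(e_1,e_1)+\mathcal{M}(e_2,e_2)+\mathcal{M}(e_3,e_3).
\]
Because the weights $(2,1,1)$ are non-increasing, the Ky Fan principle (minimizing a decreasingly weighted sum of diagonal entries over orthonormal frames) gives $\varphi\ge 2m_1+m_2+m_3=\Lambda_1+\Lambda_2$ near $(x_0,t_0)$, with equality at $(x_0,t_0)$. So $\varphi$ is a smooth upper barrier, which is the appropriate kind for the inequality $\Boxg{g(t)}u\ge f(u)$ in the barrier sense used for minimum-principle arguments. At $(x_0,t_0)$ the frame terms drop out, and
\[
\Boxg{g(t)}\varphi=2(\mathcal{M}^2+\mathcal{M}^\#)_{11}+(\mathcal{M}^2+\mathcal{M}^\#)_{22}+(\mathcal{M}^2+\mathcal{M}^\#)_{33},
\]
where the diagonal entries are Hamilton's ODE expressions, namely $m_i^2+m_jm_k$ for $\{i,j,k\}=\{1,2,3\}$, up to the normalization constant. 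I will fix that constant by checking the round sphere: there $\Boxg{}(\Lambda_1+\Lambda_2)=(\Lambda_1+\Lambda_2)^2$, which follows from $\partial_tR=2|\Ric|^2$.

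\emph{Algebraic inequality (main obstacle).} It remains to show that the resulting quadratic form
\[
Q(m)=c\,\bigl[2(m_1^2+m_2m_3)+(m_2^2+m_1m_3)+(m_3^2+m_1m_2)\bigr]
\]
satisfies $Q\ge\frac78(2m_1+m_2+m_3)^2$ whenever $m_1\le m_2\le m_3$. This is a genuinely constrained inequality, since the quadratic form is not positive semidefinite on all of $\mathbb{R}^3$, so the ordering must be used.

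I would first substitute $s=\Lambda_1+\Lambda_2$ together with the gap variables $a=m_2-m_1\ge0$ and $b=m_3-m_2\ge0$. This turns $Q-\frac78 s^2$ into a quadratic polynomial in $(s,a,b)$. I would then minimize over $s$ for fixed $a,b\ge0$ (completing the square in $s$) and check that the remaining quadratic form in $(a,b)$ is nonnegative on the quadrant. The constant $\frac78$ should come out exactly as the sharp constant of this minimization; the equality configuration is presumably of the form $a=0$ or $b=0$.

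The sign of $s$ needs care. The inequality must hold for negative $s$ too, since the lemma will be used to control negative lower bounds. The completed-square argument does not depend on the sign of $s$, but I expect the bookkeeping of the constant $c$ and the sharpness check on the boundary rays of the quadrant to be the delicate step. Once the algebraic inequality is established, the barrier inequality holds at every point of $M\times(0,T)$, which proves the lemma.
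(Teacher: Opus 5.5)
Your overall approach is the paper's. Since $2m_1+m_2+m_3=m_1+\tr\mathcal{M}=m_1+\frac{\mathcal{R}}{2}$, your Ky Fan barrier reduces to $\mathcal{M}(e_1,e_1)+\frac{\mathcal{R}}{2}$. The paper does the same thing: it writes $\Lambda_1+\Lambda_2=\frac{\mathcal{R}}{2}+m_1$ and adds $\Box\mathcal{R}=2|\Ric|^2$ to the barrier inequality for $m_1$. Your round-sphere calibration correctly gives $c=2$, and the upper-barrier direction is right.

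The gap is the step you call the main obstacle. You only outline it, and your description of it is wrong. The inequality is not constrained, the form is positive semidefinite on all of $\mathbb{R}^3$, and the ordering $m_1\le m_2\le m_3$ plays no role. Because $m_2^2+m_3^2+2m_2m_3=(m_2+m_3)^2$, $Q$ depends only on $m_1$ and $u:=m_2+m_3$, and one has the identity
\[
Q-\tfrac78(2m_1+u)^2=2\bigl(2m_1^2+m_1u+u^2\bigr)-\tfrac78(2m_1+u)^2=\tfrac12\bigl(m_1-\tfrac32u\bigr)^2\ge0
\]
for every $(m_1,m_2,m_3)\in\mathbb{R}^3$. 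The paper states this as $Q=2\bigl(s^2-3m_1s+4m_1^2\bigr)=8\bigl(m_1-\tfrac38s\bigr)^2+\tfrac78s^2$ with $s=2m_1+m_2+m_3$, completing the square in $m_1$.

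In your gap variables the identity reads $Q-\frac78s^2=\frac18(s+4a+2b)^2$. So after minimizing in $s$, the leftover form in $(a,b)$ is identically zero. Equality holds on the whole plane $s=-4a-2b$, including interior points of the quadrant such as $(m_1,m_2,m_3)=(-3,-2,0)$, not only on the rays $a=0$ or $b=0$. The sign of $s$ needs no separate care.

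Your route would succeed if carried out. As written, though, the one step carrying the content is asserted rather than proved, and your predictions about it (non-PSD form, ordering essential, boundary equality) are incorrect.
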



\begin{proof}
    Let $m_1\le m_2\le m_3$ be the eigenvalues of the curvature operator.
    Then $\Lambda_1=m_1+m_2$, $\Lambda_2=m_1+m_3$
            and $\mathcal{R}=2(m_1+m_2+m_3)$.
 Hence
    \begin{align*}
        &\Boxg{g(t)}(\Lambda_1+\Lambda_2)=\Box(\frac{\mathcal{R}}{2}+m_1) \\
        &\qquad\ge2(2m_1^2+m_2^2+m_3^2+m_1m_2+m_1m_3+2m_2m_3) \\
        &\qquad=2[(2m_1+m_2+m_3)^2-3m_1(2m_1+m_2+m_3)+4m_1^2] \\
        &\qquad\ge\frac{7}{8}(2m_1+m_2+m_3)^2 \\
        &\qquad=\frac{7}{8}(\Lambda_1+\Lambda_2)^2,
    \end{align*}
    in the barrier sense.
    Here we denote $\Boxg{}$ by $\Box$ for notational convenience.
    This completes the proof.
\end{proof}

Due to Lemma \ref{lma pt. alge.}, we know that  $2$-Ricci curvature lower bound behaves like scalar curvature lower bound. The following proposition is a general form of \cite[Lemma 8.1]{ST22JDG} and the proof here adapts the cutoff argument there. See also \cite{HT18AJM,LT22CJM,Simon13GT}.

\begin{proposition} \label{lemma st22 lma8.1}
    For any $n\ge2,\;c_0,c_1,K>0$ and $\gamma\in(0,1)$, there exist
            $\hat{T}(n,c_0,\gamma)>0$ and $\sigma(c_1,K,\gamma)>0$
            with the following property.
    Suppose that $(M^n,g(t))$ is a Ricci flow for $t\in[0,T)$, and $x_0\in M$
            satisfies $B_{g(t)}(x_0,1)\subset\subset M$ for all $t\in[0,T)$.
    Let $f$ be a continuous function on $M\times[0,T)$ satisfying
    \[
        \Boxg{g(t)}f\ge c_1f^2
    \]
    on $M\times(0, T)$ in the barrier sense.
    We assume further that
    \begin{enumerate}
        \item[\textup{(\romannumeral1)}]
            $f(\cdot,0)\ge-K$ on $B_{g(0)}(x_0,1)$;
        \item[\textup{(\romannumeral2)}]
            $\Ric_{g(t)}\le\frac{c_0(n-1)}{t}$ on $B_{g(t)}(x_0,\sqrt{t})$
                    for all $t\in(0,T)$.
    \end{enumerate}
    Then for all $t\in[0,T\wedge\hat{T})$, we have
    \[
        f(\cdot,t)\ge-Ke^{\sigma t}\quad\text{on}\quad B_{g(t)}(x_0,1-\gamma).
    \]
\end{proposition}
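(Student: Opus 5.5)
We follow the cutoff argument of \cite[Lemma 8.1]{ST22JDG}. The idea is to multiply $f$ by a time-dependent cutoff built from the distance function. Perelman's distance distortion estimate, which uses hypothesis (ii), controls the heat operator applied to that distance function. The quadratic absorption term $c_1f^2$ then controls the error terms produced by the cutoff.

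\emph{Step 1: the cutoff.} Set $\beta:=C(n)\sqrt{c_0}$ and $d_t(x):=d_{g(t)}(x_0,x)$. By hypothesis (ii) and \cite[Lemma 8.3]{Perelman} (in the form used in \cite{ST22JDG}), we have
\[
\Boxg{g(t)}\big(d_t+\beta\sqrt{t}\big)\ge0
\]
in the barrier sense wherever $d_t\ge\sqrt t$. Choose a smooth nonincreasing $\phi:\mathbb{R}\to[0,1]$ with $\phi=1$ on $(-\infty,1-\gamma/2]$, $\phi=0$ on $[1,\infty)$, $\phi'\le0$, and
\[
|\phi'|^2\le C(\gamma)\phi,\qquad \phi''\ge-C(\gamma)\sqrt\phi .
\]
One can take $\phi=\psi^2$ for a standard $\psi$. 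Define $\Phi(x,t):=\phi(d_t(x)+\beta\sqrt t)$. Take $\hat T(n,c_0,\gamma)$ so small that $\beta\sqrt{\hat T}\le\gamma/2$ and $\sqrt{\hat T}\le 1-\gamma$. Then $\Phi=1$ on $B_{g(t)}(x_0,1-\gamma)$, and wherever $\phi'\neq0$ we have $d_t\ge\sqrt t$, so the distortion estimate applies there. Since $\phi'\le0$, in the barrier sense
\[
\Box\Phi\le-\phi''|\nabla d_t|^2\le C(\gamma)\sqrt\Phi,\qquad |\nabla\Phi|^2\le C(\gamma)\Phi .
\]

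\emph{Step 2: the test function and the maximum principle.} Let $u:=\Phi f$. Suppose the conclusion fails. Then we look at the first time at which
\[
u\ge -Ke^{\sigma t}
\]
fails on the support of $\Phi$. Off the support the inequality is trivial since $u=0$; at $t=0$ it holds by (i), because $\Phi\le1$ and $f\ge-K$ there. At a negative minimum point $(x,t)$ of $u+Ke^{\sigma t}$ with $\Phi>0$ we compute
\[
\Box u=\Phi\Box f+f\Box\Phi-2\nabla\Phi\cdot\nabla f .
\]
At the minimum $\nabla u=0$, so $\nabla f=-f\nabla\Phi/\Phi$. This gives
\[
\Box u\ge c_1\Phi f^2-|f|\,C\sqrt\Phi-2C|f|^2\frac{|\nabla\Phi|^2}{\Phi}\cdot\frac1\Phi\cdot\Phi .
\]
More cleanly, written in terms of $u$ with $f<0$:
\[
\Box u\ge \frac{c_1u^2}{\Phi}-\frac{C|u|}{\sqrt\Phi}-\frac{2C u^2}{\Phi}.
\]

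\emph{Step 3: absorbing the bad terms (the main obstacle).} The term $2Cu^2/\Phi$ competes with the good term $c_1u^2/\Phi$, and $C$ depends on $\gamma$ while $c_1$ may be small. To handle this I would first try the standard fix: replace $\Phi$ by a power $\Phi^m$ with $m$ large. Then $|\nabla\Phi^m|^2/\Phi^m$ gains a factor $\Phi^{m-1}$. Alternatively, one can bound $f$ below only where it is very negative: near a violating minimum $f$ is very negative, so $c_1f^2$ dominates $C|f|$ once $|f|\ge C/c_1$. Splitting into the cases $|u|\lesssim K$ and $|u|\gg K$, the good quadratic term absorbs the gradient and Laplacian errors in the second case. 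In the first case the linear terms are bounded by $C(c_1,\gamma)(1+K)$, and this is beaten by
\[
\partial_t\big(-Ke^{\sigma t}\big)=-\sigma Ke^{\sigma t}
\]
provided $\sigma=\sigma(c_1,K,\gamma)$ is chosen large. This yields a contradiction at the first violating time, so $u\ge-Ke^{\sigma t}$ on $[0,T\wedge\hat T)$. Since $\Phi=1$ on $B_{g(t)}(x_0,1-\gamma)$, this proves the proposition. Tuning the exponent $m$ and the case split so that all constants depend only on the allowed parameters is the delicate part of the argument.
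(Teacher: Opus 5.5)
Your strategy is the paper's: a Simon--Topping cutoff built from Perelman's distance distortion, multiplied into $f$, then compared with an exponential barrier. However, the decisive lower bound on $\Box u$ at the touching point is never established, and the case split you use to finish does not work.

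\textbf{The cross term and the case split.} You miscomputed the cross term. At a minimum of $u=\Phi f$ one has $\nabla f=-f\nabla\Phi/\Phi$, so $-2\langle\nabla\Phi,\nabla f\rangle=2f|\nabla\Phi|^2/\Phi$. With only $|\nabla\Phi|^2\le C\Phi$, this is $\ge-2C|f|=-2C|u|/\Phi$, not $-2Cu^2/\Phi$. Hence
\[
\Box u\ge\frac{|u|}{\Phi}\big(c_1|u|-2C\big)-\frac{C|u|}{\sqrt\Phi}.
\]
At the first touching point $|u|=Ke^{\sigma t}\approx K$, and nothing prevents $c_1K<2C(\gamma)$; the product $c_1K$ is invariant under $f\mapsto\lambda f$, so you cannot normalize it away. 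Nothing prevents the touching point from lying near the edge of the support either, where $\Phi\to0$ and $f$ is uncontrolled at positive times. So the right-hand side has no lower bound independent of $\Phi$, and no choice of $\sigma$ beats it. Your claim that for $|u|\lesssim K$ the error terms are bounded by $C(c_1,\gamma)(1+K)$ fails for the same reason: they carry the factors $\Phi^{-1}$ and $\Phi^{-1/2}$.

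\textbf{What is needed.} You need a cutoff whose gradient decays faster at the edge: $|\nabla\Phi|^2\le C\Phi^{3/2}$ together with $\Box\Phi\le C\sqrt\Phi$. For example, take $\phi=\psi^4$, which is your ``power $m=2$'' remark applied to $\phi=\psi^2$. Then both error terms are $O(\sqrt\Phi|f|)$, and without any case split
\[
\Box u\ge c_1(\sqrt\Phi f)^2-3C\sqrt\Phi|f|\ge-\frac{9C^2}{4c_1}.
\]
This is beaten by $\sigma Ke^{\sigma t}$ once $\sigma>9C^2/(4c_1K)$.

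This is exactly what the paper does. It takes the cutoff $h$ of \cite[Lemma 7.1]{ST22JDG}, which satisfies $\Box h\le0$, $|\nabla h|^2\le V^2h^{3/2}$, and $h=e^{-kt}$ on $B_{g(t)}(x_0,1-\gamma)$. At the first zero of $\ell=hf+(1+\delta)Ke^{bt}$ it completes the square, $c_1HF^2+2V^2H^{1/2}F\ge-V^4/c_1$, and chooses $b=V^4/(c_1K)+1$ and $\sigma=b+k$.

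\textbf{Two smaller repairs.}
\begin{itemize}
\item Hypothesis (i) gives only $u\ge-K$ at $t=0$. You therefore need a strict margin such as $(1+\delta)K$ before arguing about a first failure time, and then let $\delta\to0$.
\item $f$ and $\Phi$ satisfy their inequalities only in the barrier sense. The product computation must use a lower barrier for $\Phi$ and an upper barrier for $f$, as the paper does. This is legitimate because $\Phi>0>f$ at the touching point.
\end{itemize}
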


\begin{proof}[Proof of Proposition \ref{lemma st22 lma8.1}]
    As in the proof of \cite[Lemma 8.1]{ST22JDG}, let $\hat{T}(n,c_0,\gamma),
            \allowbreak k(\gamma),\allowbreak V(\gamma)>0$ be the constants
            asserted in \cite[Lemma 7.1]{ST22JDG} with
            $\varepsilon=\frac{1}{4}$, $r_1=1-\gamma$ and $r_2=1$,
            and $h$ be the corresponding cut-off function.
    For any fixed $\delta>0$ and $(x,t)\in M\times[0,\hat{T}\wedge T)$, let
    \[
        \ell(x,t):=h(x,t)f(x,t)+(1+\delta)Ke^{bt},
    \]
    where $b>0$ is a constant to be determined later.
    Clearly we have $\ell(\cdot,0)>0$ throughout $M$, and $\ell(x,t)>0$ for all
            $x\in M\setminus B_{g(t)}(x_0,1)$ and $t\in[0,\hat{T}\wedge T)$.

    We claim that $\ell(x,t)>0$ for all $(x,t)\in M\times[0,\hat{T}\wedge T)$.
    Otherwise, let $(x_1,t_1)\in M\times(0,\hat{T}\wedge T)$ be the first point
            where $\ell(x_1,t_1)=0$.
    Then $h(x_1,t_1)f(x_1,t_1)<0$ and hence $h(x_1,t_1)>0$ and $f(x_1,t_1)<0$.
    By \cite[Lemma 7.1 \textup{(\romannumeral2)}]{ST22JDG}, the assumption
            on $f$, and the continuity of $h$ and $f$, there exists a spacetime
            neighborhood $\mathcal{U}$ of $(x_1,t_1)$ on which smooth barrier
            functions $H$ and $F$ are defined.
    More precisely, $H$ is a lower barrier for $h$ with $h\ge H>0$, while $F$
            is an upper barrier for $f$ with $f\le F<0$.
    Both agree with the corresponding functions at $(x_1,t_1)$ and satisfy
            the respective differential inequalities.
    Define
    \[
        \mathcal{L}(x,t):=H(x,t)F(x,t)+(1+\delta)Ke^{bt},
    \]
    within $\mathcal{U}$.
    Then $\mathcal{L}\ge\ell\ge0$ on $\mathcal{U}\cap\{t\le t_1\}$ with
            $\mathcal{L}(x_1,t_1)=\ell(x_1,t_1)=0$.
    Hence, at $(x_1,t_1)$, we have
    \begin{equation} \label{eqn nabla L}
        0=\nabla\mathcal{L}=F\nabla H+H\nabla F,
    \end{equation}
    and
    \begin{align*}
        0&\ge\Boxg{g(t)}\mathcal{L} \\
        &=F\Box H+H\Box F-2\langle\nabla H,\nabla F\rangle+b(1+\delta)Ke^{bt} \\
        &\ge c_1H F^2+2V^2H^\frac{1}{2}F+bK \\
        &\ge bK-\frac{V^4}{c_1},
    \end{align*}
    where we have used \eqref{eqn nabla L} and \cite[Lemma 7.1
            \textup{(\romannumeral2)}]{ST22JDG} in the second inequality.
    Choosing $b(c_1,K,\gamma)=\frac{V^4}{c_1K}+1$, we obtain a contradiction.

    Therefore, for all $t\in[0,\hat{T}\wedge T)$ and $x\in
            B_{g(t)}(x_0,1-\gamma)$, we have
    \[
        e^{-kt}f(x,t)+(1+\delta)Ke^{bt}>0.
    \]
    Letting $\delta\to0$, we conclude that
    \[
        f(x,t)\ge-Ke^{(b+k)t}.
    \]
    This completes the proof.
\end{proof}


\begin{corollary}\label{2-Ricci-preservation}
    For any $c_0,K>0$ and $\gamma\in(0,1)$, there exist
            $\hat{T}(c_0,K,\gamma)>0$ and $\sigma(K,\gamma)>0$
            with the following property.
    Suppose that $(M^3,g(t))$ is a $3$-dimensional Ricci flow for $t\in[0,T)$,
            and $x_0\in M$ satisfies $B_{g(t)}(x_0,1)\subset\subset M$ for all
            $t\in[0,T)$.
    We assume further that
    \begin{enumerate}
        \item[\textup{(\romannumeral1)}]
            $\Rick{2}_{g(0)}\ge-K$ on $B_{g(0)}(x_0,1)$;
        \item[\textup{(\romannumeral2)}]
            $\Ric_{g(t)}\le\frac{2c_0}{t}$ on $B_{g(t)}(x_0,\sqrt{t})$
                    for all $t\in(0,T)$.
    \end{enumerate}
    Then for all $t\in[0,T\wedge\hat{T})$, we have
    \[
        \Rick{2}_{g(t)}\ge-Ke^{\sigma t}\ge-2K
                \quad\text{on}\quad B_{g(t)}(x_0,1-\gamma).
    \]
\end{corollary}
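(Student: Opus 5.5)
The plan is to apply Proposition \ref{lemma st22 lma8.1} with $n=3$ and $f:=\Lambda_1+\Lambda_2=\Rick{2}_{g(t)}$, and then shrink the time interval so that the exponential factor is at most $2$.

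\textbf{Verifying the hypotheses.} By Lemma \ref{lma pt. alge.}, $f$ is continuous on $M\times[0,T)$. It also satisfies $\Boxg{g(t)}f\ge\frac{7}{8}f^2$ in the barrier sense on $M\times(0,T)$. The differential-inequality hypothesis therefore holds with $c_1=\frac78$. Hypothesis (i) of the proposition is exactly (i) of the corollary. For hypothesis (ii): the proposition requires $\Ric_{g(t)}\le \frac{c_0(n-1)}{t}$, which for $n=3$ reads $\frac{2c_0}{t}$. This matches assumption (ii) of the corollary with the same $c_0$.

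\textbf{Applying the proposition.} The proposition yields $\hat T_1(3,c_0,\gamma)$ and $\sigma(\frac78,K,\gamma)=:\sigma(K,\gamma)$. With these, for all $t\in[0,T\wedge\hat T_1)$ we get
\[
\Rick{2}_{g(t)}\ge -Ke^{\sigma t}\quad\text{on } B_{g(t)}(x_0,1-\gamma).
\]

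\textbf{Getting the factor $2$.} To obtain the second inequality $-Ke^{\sigma t}\ge -2K$, we need $e^{\sigma t}\le 2$, that is, $t\le \frac{\log 2}{\sigma}$. So I would set
\[
\hat T(c_0,K,\gamma):=\min\Big\{\hat T_1(3,c_0,\gamma),\ \frac{\log 2}{\sigma(K,\gamma)}\Big\}.
\]
This depends on $c_0,K,\gamma$, consistent with the stated dependence.

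There is no real obstacle here. The only points needing care are the bookkeeping ones: the constant $c_1=\frac78$ is universal, so $\sigma$ depends only on $K$ and $\gamma$, and the factor $n-1=2$ is matched correctly in hypothesis (ii).
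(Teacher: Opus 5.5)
Your proposal is correct and matches the paper's proof. The paper likewise applies Proposition~\ref{lemma st22 lma8.1} with $n=3$, $f=\Lambda_1+\Lambda_2$ and $c_1=\tfrac78$ from Lemma~\ref{lma pt. alge.}, noting that hypothesis (ii) becomes $\frac{2c_0}{t}$, and then shrinks $\hat T$ so that $e^{\sigma\hat T}\le 2$.
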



\begin{proof}
    By Proposition \ref{lemma st22 lma8.1} and Lemma \ref{lma pt. alge.},
            there exist $\hat{T}(c_0,\gamma)>0$ and $\sigma(K,\gamma)>0$
            such that for all $t\in[0,T\wedge\hat{T})$, we have
            $\Rick{2}_{g(t)}\ge-Ke^{\sigma t}$.
    Shrinking $\hat{T}>0$ to ensure that $e^{\sigma\hat{T}}\le2$,
            this completes the proof.
\end{proof}

\begin{remark}
    It is interesting to compare Corollary \ref{2-Ricci-preservation} with \cite[Proposition 2.2]{Chen09JDG} by Chen, where Chen assumes a fixed-scale upper Ricci bound, while we assume  a curvature-scale upper Ricci bound on shrinking balls.  
\end{remark}


Once we obtain local preservation of $2$-Ricci curvature lower bound, next step is to investigate the local non-collapsing propagation along $3$-dimensional Ricci flow. By applying Croke's \cite{Croke80AnnSci} and Wang's
        \cite[Theorem 5.7]{Wang18CambJ} results, we obtain the
        no-local-collapsing theorem up to  curvature-scale.

\begin{theorem}\label{non-collapsing}
    For any $n\ge3,\,\alpha,i_0,R>0$ and $\Lambda\ge0$, there exist
            $\bar{T}(n,\alpha,i_0,R,\Lambda)>0$,
            $\rho(n,\alpha)\in(0,1/\sqrt{\alpha})$
            and $\kappa(n)>0$ with the following property.
    Let $(M^n,g(t)),\,t\in[0,T]$ be a complete Ricci flow with uniformly
            \emph{bounded curvature}.
    Suppose for some $p\in M$,
    \[
        \mathcal{R}_{g(0)}\ge-\Lambda,\quad\Inj_{g(0)}\ge i_0
                \quad\text{on}\quad B_{g(0)}(p,R)
    \]
    and
    \[
        |\Rm|_{g(t)}\le\frac{\alpha}{t}\quad\text{on}\quad
                B_{g(t)}(p,\sqrt{t})\quad\text{for}\quad t\in(0,T].
    \]
    Then for any $t\in(0,\bar{T}\wedge T]$ and $r\in(0,\rho\sqrt{t}]$, we have
    \[
        \VolB_{g(t)}(p,r)\ge\kappa r^n.
    \]
\end{theorem}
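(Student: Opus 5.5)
The plan is to combine two ingredients: a non-collapsing estimate at time $0$ from Croke's inequality, and Wang's no-local-collapsing theorem \cite[Theorem 5.7]{Wang18CambJ}, which propagates such an estimate forward along the flow down to the curvature scale.

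\emph{Step 1: the initial time.} Croke's inequality \cite[Proposition 14]{Croke80AnnSci} gives $\VolB_{g(0)}(x,r)\ge c(n)r^n$ whenever $r\le \Inj_{g(0)}(x)/2$. Hence every ball centered in $B_{g(0)}(p,R/2)$ of radius at most $r_0:=\min\{i_0/2,R/2\}$ is $c(n)$-noncollapsed. Together with $\mathcal{R}_{g(0)}\ge-\Lambda$, this gives a local lower bound for Perelman's $\mu$-functional (equivalently, a local log-Sobolev inequality) on $B_{g(0)}(p,R/2)$ at scales $\tau\le r_0^2$. I would obtain it by the standard route: Croke's isoperimetric constant yields a local Sobolev inequality in dimension $n$, which yields a local log-Sobolev inequality, with the scalar curvature term absorbed using $-\Lambda\tau$. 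This step uses only $n$, $\Lambda$, $i_0$, $R$.

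\emph{Step 2: propagation by Wang's theorem.} Wang's local $\bar\nu$-functional satisfies a monotonicity along the flow. The scale is controlled by the cutoff, and the cutoff is controlled by the hypothesis $|\Rm|_{g(t)}\le\alpha/t$ on $B_{g(t)}(p,\sqrt t)$. The output is a bound
\[
\bar\nu\big(B_{g(t)}(p,\tfrac{R}{4}),g(t),\tau\big)\ge -A(n,\Lambda,i_0,R)
\]
for $t\le\bar T$ and $\tau$ small. The global bounded-curvature hypothesis is used only to justify applying Wang's theorem (completeness and the maximum principle for the conjugate heat equation); it does not enter the constants. Here $\bar T$ is chosen so that $\sqrt{\bar T}\ll\min\{i_0,R\}$ and $\Lambda\bar T\le1$.

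\emph{Step 3: volume from the functional.} The $\bar\nu$ lower bound gives the $\kappa$-noncollapsing in the usual way: at a scale $r$ where $|\Rm|\le r^{-2}$ on $B_{g(t)}(p,r)$, we have $\VolB_{g(t)}(p,r)\ge\kappa r^n$. Taking $r\le\rho\sqrt t$ with $\rho<1/\sqrt\alpha$ gives $|\Rm|\le\alpha/t\le r^{-2}$ on $B_{g(t)}(p,r)\subset B_{g(t)}(p,\sqrt t)$. So the curvature scale condition holds, and we get $\VolB_{g(t)}(p,r)\ge\kappa r^n$.

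\emph{Main obstacle.} The main difficulty is the constant bookkeeping. Wang's $\kappa$ depends on the functional bound $A$, and therefore on $\Lambda$, $i_0$, $R$. To get $\kappa=\kappa(n)$ as stated, I would rescale so that $i_0$, $R$ and $\Lambda^{-1/2}$ are large compared with $\sqrt{\bar T}$. Then at time-scales $\tau\lesssim\bar T$ the initial log-Sobolev constant is close to Euclidean, so that $A$ depends only on $n$ after shrinking $\bar T$. Any residual dependence is absorbed into $\bar T(n,\alpha,i_0,R,\Lambda)$.
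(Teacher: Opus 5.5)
Your strategy is the paper's. You use Croke's isoperimetric inequality at $t=0$, convert it into a lower bound for the local $\mu$-functional via \cite[Lemma 3.5 (3.24)]{Wang18CambJ}, and then apply Wang's no-local-collapsing theorem \cite[Theorem 5.7]{Wang18CambJ}; your Steps 2--3 (propagation plus a Perelman-type volume argument at time $t$) are what that theorem packages. However, two steps do not hold as written.

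\textbf{The domain in Step 1.} Ball noncollapsing from Croke's Proposition 14 does not by itself give a Sobolev or log-Sobolev inequality. What gives it is Croke's isoperimetric inequality \cite[Theorem 11]{Croke80AnnSci}, and the paper uses it only on domains whose diameter is below the injectivity radius. On $B_{g(0)}(p,R/2)$ with $R\gg i_0$ there is no uniform isoperimetric constant: large balls in $\mathbb{R}^{n-1}\times S^1$ have $|\partial D|/|D|^{(n-1)/n}\sim\varrho^{-1/n}\to0$. Patching small balls with a partition of unity would need an overlap bound, i.e.\ a volume upper bound, which $\mathcal{R}\ge-\Lambda$ does not provide.

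You do not need a large domain. Set $A:=\max\{1000n,\sqrt{n}\alpha/(n-1)\}$, so that $\Ric_{g(s)}\le(n-1)A/s$ on $B_{g(s)}(p,\sqrt s)$. Wang's theorem then only needs $\mu$ on $\Omega_t:=B_{g(0)}(p,20A\sqrt t)$ at scales $\tau\in[t,2t]$. Choose $\bar T$ with $20A\sqrt{\bar T}<R$, $40A\sqrt{\bar T}<i_0$ and $\Lambda\bar T\le1$. Then $\Omega_t\subset B_{g(0)}(p,R)$ and $\mathrm{diam}\,\Omega_t<i_0$, so Croke applies and gives $\mu(\Omega_t,g(0),\tau)\ge\log\frac{C_n^n}{\omega_nn^n}-2$.

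\textbf{The bookkeeping for $\kappa(n)$.} Your claim that the initial log-Sobolev constant is ``close to Euclidean'' at small scales is false. At $t=0$ you control only $\mathcal{R}$ from below and the injectivity radius, so the curvature can be arbitrarily large at any fixed scale, uniformly over the class. The claim is also unnecessary: Croke's constant $C_n$ is already dimensional, and $\Lambda\tau\le2$ handles the scalar term.

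What you do need to watch is that the loss in your propagation step does not depend on $\alpha$. The hypothesis $|\Rm|\le\alpha/t$ on $B_{g(t)}(p,\sqrt t)$ is scale-invariant, so such a dependence could not be absorbed by shrinking $\bar T$. In \cite[Theorem 5.7]{Wang18CambJ} the loss $2^{n+7}+2$ is independent of $A$. The parameter $\alpha$ enters only through the radius of $\Omega_t$, and hence only through $\bar T$.

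With these corrections, and $\rho=\min\{1,1/\sqrt{n\alpha}\}$ so that $\mathcal{R}_{g(t)}\le r^{-2}$ on $B_{g(t)}(p,r)$, your argument becomes the paper's.
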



\begin{proof}
    Choose $A:=\max\{1000n,\sqrt{n}\alpha/(n-1)\}$ and
            $\rho:=\min\{1,1/\sqrt{n\alpha}\}$.
    Then for any $t\in(0,T]$ and $r\in(0,\rho\sqrt{t}]$, we have
            for all $s\in(0,t]$,
    \[
        \Ric_{g(s)}\le\sqrt{n}|\Rm|_{g(s)}
                \le\frac{\sqrt{n}\alpha}{s}\le\frac{(n-1)A}{s}
                \quad\text{on}\quad B_{g(s)}(p,\sqrt{s}),
    \]
    and
    \[
        \mathcal{R}_{g(t)}\le n|\Rm|_{g(t)}
                \le\frac{n\alpha}{t}\le\frac{1}{\rho^2t}\le r^{-2}
                \quad\text{on}\quad B_{g(t)}(p,r)\subset B_{g(t)}(p,8A\sqrt{t}).
    \]
    Let $\Omega_t := B_{g(0)}(p,20A\sqrt{t})$.
    By \cite[Theorem 5.7]{Wang18CambJ}, we have
    \[
        \frac{\VolB_{g(t)}(p,r)}{\omega_n r^n}\ge\exp\Big\{
                -2^{n+7}-2+\inf_{\tau\in[t,2t]}\mu(\Omega_t,g(0),\tau)\Big\}.
    \]

    It suffices to find a $\bar{T}>0$ such that
            $\inf_{\tau\in[t,2t]}\mu(\Omega_t,g(0),\tau)$ is bounded from below
            by a constant depending only on $n$ for all
            $t\in(0,\bar{T}\wedge T]$.
    Choose $\bar{T}>0$ sufficiently small such that
    \[
        20A\sqrt{\bar{T}}<R,\quad40A\sqrt{\bar{T}}<i_0\quad\text{and}\quad
                \Lambda\bar{T}\le1.
    \]
    Then $\Omega_t\subset B_{g(0)}(p,R)$ and
            $\mathrm{diam}_{g(0)}(\Omega_t)\le40A\sqrt{t}<i_0$.
    By \cite[Theorem 11]{Croke80AnnSci}, there exists a constant $C_n>0$
            depending only on $n$ such that
    \[
        \inf_{D\subset\subset\Omega_t}\frac{|\partial D|}{|D|^\frac{n-1}{n}}
                \ge C_n.
    \]
    Here area and volume are measured with respect to $g(0)$.
    Therefore, \cite[Lemma 3.5 (3.24)]{Wang18CambJ} implies that
            for any $\tau\in[t,2t]$,
    \[
        \mu(\Omega_t,g(0),\tau)\ge\log\frac{C_n^n}{\omega_nn^n}-\Lambda\tau
                \ge\log\frac{C_n^n}{\omega_nn^n}-2.
    \]
    This completes the proof.
\end{proof}

However, the above local non-collapsing propagation is still not enough to
        obtain the pseudo-locality type result in our setting.
In particular, we still cannot obtain the analogue of Lemma 2.1 in
        \cite{ST22JDG} under the assumptions of $2$-Ricci curvature lower bound and uniform injectivity radius lower bound.
Because the proof of Lemma 2.1 in \cite{ST22JDG} is a contradiction blow-up
        argument, which requires large-scale non-collapsing and Theorem
        \ref{non-collapsing} gives only non-collapsing at fixed-scale after
        blow-up.
Note that the above local non-collapsing propagation only assumes scalar
        curvature lower bound and holds in any dimension $n\geq 3$. It is still
        interesting to improve this non-collapsing propagation under
        $2$-Ricci curvature lower bound in $3$-dimensional Ricci flow.

Finally, we would like to  point out the following topological rigidity results if one allows almost-Euclidean structure. The proof is just a simple observation by applying Cheng's short-time existence of Ricci flow \cite{Cheng25AdvMath} and our work \cite{HP26BLMS} on diffeomorphism criterion via Ricci flow, see also \cite{Wang20arXiv}. To our knowledge, Theorem \ref{thm entropy diff.} and Corollary \ref{thm isop. diff.} below have not previously appeared in this generality. Theorem 1.5 and Theorem 4.5 in \cite{Cheng25AdvMath} assume exact Euclidean isoperimetric inequality or non-negativity of $\nu$-entropy to conclude isometry to  Euclidean space, while  our results give a natural diffeomorphism gap version of Cheng’s metric rigidity theorems. For the definition of local and global $\nu$-entropy, one may see \cite{Wang18CambJ}.

\begin{theorem} \label{thm entropy diff.}
    For any $n\ge2$, there exists a constant $\varepsilon(n)>0$ depending only
            on $n$ such that every complete noncompact $n$-dimensional
            Riemannian manifold $(M^n,g)$ satisfying $\nu(M,g)\ge-\varepsilon$
            is diffeomorphic to $\mathbb{R}^n$.
\end{theorem}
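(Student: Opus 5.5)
The plan is to run Ricci flow from $g$ and then invoke the diffeomorphism criterion of \cite{HP26BLMS}. The hypothesis $\nu(M,g)\ge-\varepsilon$ is scale invariant. It bounds from below the local entropy $\nu(\Omega,g,\tau)$ for every domain $\Omega$ and every scale $\tau>0$. This is exactly the hypothesis of Cheng's short-time existence theorem \cite{Cheng25AdvMath}, which is a Perelman pseudo-locality type statement for possibly unbounded curvature.

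\textbf{Step 1: existence and estimates.} For $\varepsilon\le\varepsilon_0(n)$, Cheng's theorem produces a complete Ricci flow $g(t)$ with $g(0)=g$. Because the entropy lower bound holds at every scale, the flow exists for all $t\in[0,\infty)$. It satisfies
\[
|\Rm|_{g(t)}\le\frac{\alpha(\varepsilon)}{t},\qquad \Inj_{g(t)}\ge\beta(\varepsilon)\sqrt{t},
\]
where $\alpha(\varepsilon)\to0$ as $\varepsilon\to0$ and $\beta$ is bounded below. The key point is that the scale-invariant hypothesis lets one apply the local existence on balls of radius $\sqrt{T}$ for every $T$, after rescaling. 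By monotonicity of $\nu$ along the flow, $\nu(M,g(t))\ge-\varepsilon$ is also preserved.

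\textbf{Step 2: apply the criterion.} The criterion of \cite{HP26BLMS} (cf. \cite{Wang20arXiv}) states the following. If a complete noncompact manifold carries an immortal Ricci flow with $|\Rm|\le\alpha/t$ for $\alpha$ small depending on $n$, together with the scale-invariant injectivity bound $\Inj\ge\beta\sqrt{t}$, then it is diffeomorphic to $\mathbb{R}^n$. The mechanism behind it is that $\exp_p$ of $g(t)$ at scale $\sqrt{t}$ exhausts $M$ by balls diffeomorphic to $\mathbb{R}^n$, and the curvature decay forces $\mathrm{dist}_{g(t)}\ge c\sqrt{t}\to\infty$. I would verify its hypotheses with the constants from Step 1 and choose $\varepsilon(n)$ small enough that $\alpha(\varepsilon)$ is below the threshold the criterion requires.

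\textbf{Main obstacle: long-time existence and the injectivity bound.} Cheng's theorem is stated for short time. To get existence for all time I would rescale $g_\lambda=\lambda^{-2}g$: the entropy bound is unchanged, so each rescaled flow exists on a fixed interval $[0,1]$ with the same estimates. Undoing the scaling gives a flow on $[0,\lambda^2]$ with curvature at most $\alpha/t$. Taking $\lambda\to\infty$ and using a local compactness argument (Shi-type local estimates from the curvature bound) yields an immortal flow. If instead the criterion of \cite{HP26BLMS} only needs the existence of such flows on arbitrarily long intervals, I would use those directly and avoid taking a limit. The injectivity-radius bound $\Inj_{g(t)}\ge\beta\sqrt{t}$ comes from the entropy lower bound via no local collapsing at scale $\sqrt{t}$, combined with the curvature bound and Cheeger--Gromov--Taylor.
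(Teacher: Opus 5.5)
Your proposal is correct and follows essentially the same route as the paper. Apply Cheng's Theorem 4.3 with a fixed small $\alpha$ on intervals $[0,C_nT_i]$ with $T_i\to\infty$ (the paper takes $\alpha=\frac{1}{4(n-1)}$ and $\varepsilon=\delta^2$), extract a locally smooth limit as in \cite{CHL24AnnPDE} to get an immortal flow with $|\Rm|\le\alpha/t$, rescale and use Cheeger--Gromov--Taylor to get $\Inj_{g(t)}\ge i_0\sqrt{t}$, and conclude by \cite[Theorem 1.1]{HP26BLMS}. The only difference is that the paper reads off the almost-Euclidean volume ratio at scale $\sqrt{t}$ directly from Cheng's theorem, so it needs no separate no-local-collapsing argument, and it never uses monotonicity of $\nu$ along a possibly unbounded-curvature flow (which you assert but don't use).
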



\begin{proof}
    Take $\varepsilon:=\delta^2$, where $\delta>0$ is the constant in
            \cite[Theorem 4.3]{Cheng25AdvMath} with $\alpha=\frac{1}{4(n-1)}$.
    For any positive $T_i\to+\infty$, applying
            \cite[Theorem 4.3]{Cheng25AdvMath} with $\alpha=\frac{1}{4(n-1)}$,
            $T=T_i$ and $\eta=\frac{1}{2}$, we obtain a sequence of
            complete Ricci flows $g_i(t)$ on $M\times[0,C_nT_i]$ with
            $g_i(0)=g$ satisfying
    \[
        |\Rm|(x,t)\le\frac{\alpha}{t}
    \]
    and
    \[
        \inf_{\rho\in(0,\alpha^{-1}\sqrt{t})}\frac{\VolB_{{g_i(t)}}(x,\rho)}{\rho^n}
                \ge(1-\alpha)\omega_n
    \]
    for $(x,t)\in M\times(0,C_nT_i]$, where $C_n=(\epsilon/2)^2$ and
            $\epsilon$ is the constant asserted in
            \cite[Theorem 4.3]{Cheng25AdvMath}.
    By similar arguments as in the proof of \cite[Theorem 1.3]{CHL24AnnPDE},
            we can extract convergent subsequence in locally smooth sense
            to obtain a complete long-time solution $g(t)$ to the Ricci flow with
            $g(0)=g$ satisfying the same curvature estimates and volume
            estimates for $(x,t)\in M\times(0,+\infty)$.
    For any $t>0$, consider $\tilde{g}:=\frac{4\alpha^2}{t}g(t)$.
    By Cheeger-Gromov-Taylor's injectivity radius estimate
            \cite[Theorem 4.7]{CGT82JDG} and rescaling back,
            we have
    \[
        \Inj_{g(t)}\ge i_0\sqrt{t}\quad\text{for some constant}\quad i_0(n)>0.
    \]
    The result then follows from \cite[Theorem 1.1]{HP26BLMS}.
\end{proof}


\begin{corollary} \label{thm isop. diff.}
    For any $n\ge2$, there exists a constant $\delta(n)>0$ depending only on $n$
            such that every complete noncompact $n$-dimensional
            Riemannian manifold $(M^n,g)$ with nonnegative scalar curvature
            satisfying
    \[
        (\Area_g(\partial\Omega))^n
                \ge(1-\delta)n^n\omega_n(\Vol_g(\Omega))^{n-1}
    \]
    for any regular domain $\Omega\subset M$ is diffeomorphic to $\mathbb{R}^n$.
\end{corollary}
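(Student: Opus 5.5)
The plan is to deduce the corollary from Theorem \ref{thm entropy diff.}. It suffices to show that the almost-Euclidean isoperimetric inequality, together with $\mathcal{R}_g\ge0$, forces $\nu(M,g)\ge-\varepsilon(n)$ once $\delta$ is small enough.

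\textbf{Step 1: isoperimetry to log-Sobolev.} I would take the standard passage from an isoperimetric inequality to a log-Sobolev inequality. The given inequality for all regular domains implies, by the coarea formula, a Sobolev inequality $\|u\|_{L^{n/(n-1)}}\le(1-\delta)^{-1/n}(n\omega_n^{1/n})^{-1}\|\nabla u\|_{L^1}$ for compactly supported Lipschitz $u$. Alternatively, one can use Schwarz symmetrization onto Euclidean space against the almost-Euclidean isoperimetric profile. Symmetrization gives, for each $u\in C_c^\infty(M)$ with $\int u^2=1$, a radial function $u^*$ on $\mathbb{R}^n$ with the same distribution and $\int|\nabla u^*|^2\le(1-\delta)^{-2/n}\int|\nabla u|^2$. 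The Euclidean log-Sobolev inequality (Gross), i.e.\ $\nu(\mathbb{R}^n)=0$, then yields for every $\tau>0$
\[
    \int_M\tau\bigl((1-\delta)^{-2/n}\cdot4|\nabla u|^2\bigr)-u^2\log u^2-\tfrac n2\log(4\pi\tau)-n\ \ge0 .
\]

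\textbf{Step 2: absorb the constant.} Rescaling the gradient constant costs at most a term of size $\frac n2\log\bigl((1-\delta)^{-2/n}\bigr)$. Precisely, replacing $\tau$ by $(1-\delta)^{2/n}\tau$ gives $\mathcal{W}$-type functionals bounded below by $\log(1-\delta)$, uniformly in $\tau$. Since $\mathcal{R}\ge0$, adding $\tau\mathcal{R}u^2$ only increases the functional. Hence $\mu(M,g,\tau)\ge\log(1-\delta)$ for all $\tau>0$, and therefore $\nu(M,g)\ge\log(1-\delta)$. Choosing $\delta$ with $-\log(1-\delta)\le\varepsilon(n)$ and invoking Theorem \ref{thm entropy diff.} then concludes the proof. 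Alternatively, one could cite directly the corresponding estimate \cite[Lemma 3.5]{Wang18CambJ} (as used in the proof of Theorem \ref{non-collapsing}) in its sharp almost-Euclidean form, or Cheng's result relating isoperimetry and entropy in \cite{Cheng25AdvMath}.

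\textbf{Main obstacle.} The main obstacle is the symmetrization on a noncompact manifold with only an isoperimetric inequality for regular domains. I need to check that the almost-Euclidean inequality, which is stated for all volumes, gives the P\'olya--Szeg\H{o} comparison with exactly the factor $(1-\delta)^{-1/n}$ per level set, using $|\partial\{u>s\}|\ge(1-\delta)^{1/n}|\partial B^{\mathbb{R}^n}_{r(s)}|$. I would approximate general level sets by regular domains via Sard's theorem. The other delicate point is making the entropy constant sharp, so that the loss tends to $0$ as $\delta\to0$ rather than a fixed dimensional constant; Croke-type constants would not suffice here.
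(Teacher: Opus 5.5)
Your proposal is correct and follows the paper's route. The paper also takes $\delta:=1-e^{-\varepsilon}$ with $\varepsilon$ from Theorem \ref{thm entropy diff.}, obtains $\nu(M,g)\ge n\log(1-\delta)^{1/n}=\log(1-\delta)=-\varepsilon$ (with $\mathcal{R}\ge0$ absorbing the scalar curvature term), and then applies Theorem \ref{thm entropy diff.}. The only difference is that the paper cites \cite[Lemma 3.5 (3.25)]{Wang18CambJ} for the sharp isoperimetry-to-entropy step, whereas you sketch that step yourself via Schwarz symmetrization (P\'olya--Szeg\H{o} with the factor $(1-\delta)^{-2/n}$) and Gross's Euclidean log-Sobolev inequality, which is where the sharp constant comes from.
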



\begin{proof}
    Take $\delta:=1-e^{-\varepsilon}$, where $\varepsilon>0$ is the constant
            in Theorem \ref{thm entropy diff.}.
    By \cite[Lemma 3.5 (3.25)]{Wang18CambJ}, we have
    \[
        \nu(\Omega,g,\tau)\ge n\log(1-\delta)^{1/n}=-\varepsilon
    \]
    for any regular domain $\Omega\subset M$ and $\tau>0$, which implies
    \[
        \nu(M,g)\ge-\varepsilon.
    \]
    The result then follows from Theorem \ref{thm entropy diff.}.
\end{proof}


\appendix

\section{Algebraic curvature estimates}\label{Algebraic}

In this appendix, we would like to show the following algebraic curvature estimates for
        manifolds with $\Rick{2}\ge0$ in any dimension $n\ge3$, which may be of independent interest.

\begin{proposition}
    Let $(M^n,g),\;n\ge3$ be a Riemannian manifold with $\Rick{2}\ge0$.
    Then $\mathcal{R}\ge0$ and the following assertions hold:
    \begin{enumerate}
        \item[\textup{(\romannumeral1)}] $\displaystyle
                \Ric\ge-\frac{\mathcal{R}}{n-2}g$;
        \item[\textup{(\romannumeral2)}] $|\Ric|\le c_n\mathcal{R}$,
                where $\displaystyle c_n=\left\{\;\begin{aligned}
                    &\sqrt{3},\quad n=3,\\
                    &1,\quad n\ge4;
                \end{aligned}\right.$
        \item[\textup{(\romannumeral3)}] $|\Rm|\le C_n\mathcal{R}$,
                if we further assume $g$ has nonnegative isotropic curvature
                (Weakly PIC) when $n\ge4$, where $C_n>0$ is a constant
                depending only on $n$.
            Here $|\Rm|$ denotes the standard norm of the $(0,4)$-tensor.
            In particular, one can take $C_3=\sqrt{11}$ and $C_4=\sqrt{3}$.
    \end{enumerate}
\end{proposition}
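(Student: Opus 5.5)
The first two items only use the ordered eigenvalues $\Lambda_1\le\cdots\le\Lambda_n$ of the Ricci operator and the single constraint $\Lambda_1+\Lambda_2\ge0$. From $\Lambda_2\ge\Lambda_1$ and $\Lambda_2\ge-\Lambda_1$ we get $\Lambda_2\ge|\Lambda_1|\ge0$, hence $\Lambda_i\ge0$ for $i\ge2$. Then $\mathcal{R}=(\Lambda_1+\Lambda_2)+\sum_{i\ge3}\Lambda_i\ge0$.

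For (i), if $\Lambda_1<0$ we use $\Lambda_i\ge\Lambda_2\ge-\Lambda_1$ for all $i\ge2$. This gives $\mathcal{R}\ge\Lambda_1-(n-1)\Lambda_1=-(n-2)\Lambda_1$, which is exactly $\Lambda_1\ge-\mathcal{R}/(n-2)$. This is the same manipulation as in the first proposition of Section \ref{comap.}.

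For (ii), I fix $\mathcal{R}$ and maximize the convex function $|\Ric|^2=\sum\Lambda_i^2$ over the polytope
\[
P:=\Big\{\Lambda_1\le\cdots\le\Lambda_n,\ \Lambda_1+\Lambda_2\ge0,\ \textstyle\sum\Lambda_i=\mathcal{R}\Big\}.
\]
$P$ is compact, since all $\Lambda_i\ge-\mathcal{R}/(n-2)$ and $\Lambda_i\le\mathcal{R}+\mathcal{R}/(n-2)$ by (i). So the maximum is attained at a vertex. A vertex needs $n-1$ active constraints among the $n$ inequalities $\Lambda_1+\Lambda_2\ge0$ and $\Lambda_i\le\Lambda_{i+1}$ ($i=1,\dots,n-1$). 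Up to the trace normalization, the candidates are therefore:
\begin{itemize}
\item[(a)] all the ordering constraints active, giving $(\mathcal{R}/n,\dots,\mathcal{R}/n)$;
\item[(b)] $\Lambda_1+\Lambda_2=0$ together with all ordering constraints except one, giving $(0,\dots,0,b,\dots,b)$ or $(-a,a,\dots,a)$.
\end{itemize}
Evaluating these:
\begin{itemize}
\item $(-a,a,\dots,a)$ with $(n-2)a=\mathcal{R}$ gives $|\Ric|^2=n\mathcal{R}^2/(n-2)^2$, which is $3\mathcal{R}^2$ for $n=3$ and at most $\mathcal{R}^2$ for $n\ge4$.
\item $(0,\dots,0,b,\dots,b)$ and the round vertex give at most $\mathcal{R}^2$.
\end{itemize}
This yields $c_3=\sqrt3$ and $c_n=1$ for $n\ge4$. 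I will check the vertex enumeration carefully, since this is where an error could slip in.

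For (iii) with $n=3$, the Weyl tensor vanishes. The standard decomposition then gives $|\Rm|^2=4|\Ric|^2-\mathcal{R}^2$, and (ii) yields $|\Rm|^2\le11\mathcal{R}^2$, so $C_3=\sqrt{11}$.

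For $n\ge4$ under weak PIC, the plan is to bound all sectional curvatures in absolute value by $C\mathcal{R}$; a standard algebraic estimate then controls the full tensor by its sectional curvatures.
\begin{itemize}
\item Lower bounds. Weak PIC applied to orthonormal frames $e_1,\dots,e_4$, with the orientation flipped to kill the $R_{1234}$ term, gives $K_{13}+K_{14}+K_{23}+K_{24}\ge0$. Averaging over frames gives lower bounds on each $K_{ij}$ in terms of Ricci curvatures, hence, by (i) and (ii), $K_{ij}\ge-C\mathcal{R}$.
\item Upper bounds. Since $\sum_{j}K_{ij}=\Lambda$-type Ricci entries are at most $c_n\mathcal{R}$ and the other sectional curvatures are at least $-C\mathcal{R}$, we get $K_{ij}\le C\mathcal{R}$.
\end{itemize}
For the sharp $C_4=\sqrt3$, I would instead split $|\Rm|^2=|W|^2+\frac{4}{n-2}|\mathring{\Ric}|^2+\frac{2}{n(n-1)}\mathcal{R}^2$. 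In dimension $4$, weak PIC is equivalent to the eigenvalues of $W^\pm$ being bounded below by $-\mathcal{R}/6$. Since $W^\pm$ is traceless, this bounds each eigenvalue above by $\mathcal{R}/3$, hence $|W|^2\le c\mathcal{R}^2$; combining with (ii) gives the constant.

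I expect this last step to be the main obstacle: getting the precise constant $\sqrt3$, rather than merely some $C_n$, requires optimizing jointly over the Weyl and Ricci parts.
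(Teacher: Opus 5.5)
Your arguments for $\mathcal{R}\ge0$, for (i), and for the case $n=3$ of (iii) are the paper's.

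\textbf{Different routes, both correct.} For (ii), the paper handles $n\ge4$ by an explicit regrouping of $\sum_{i<j}\Lambda_i\Lambda_j$ in the nonnegative quantities $\Lambda_i+\Lambda_1$, and handles $n=3$ by the crude bound $|\Lambda_i|\le\mathcal{R}$. Your convex maximization over the polytope is correct, and the vertex list is complete. Dropping $\Lambda_1+\Lambda_2\ge0$ gives the round point. Dropping $\Lambda_k\le\Lambda_{k+1}$ gives $(-a,a,\dots,a)$ if $k=1$ and $(0,\dots,0,b,\dots,b)$ if $k\ge2$. Your route treats all $n$ at once and exhibits extremizers, so it also shows that $c_n$ is sharp. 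For (iii) with general $n\ge4$, the paper argues by contradiction and compactness, reducing to Brendle's fact that a Ricci-flat weakly PIC algebraic curvature operator vanishes. Your sectional-curvature argument is a quantitative version of that rigidity and yields explicit $C_n$. The averaging step you only assert does go through. Apply your inequality to the frames $(e_1,e_c,e_2,e_d)$ and $(e_1,e_d,e_2,e_c)$, add, and sum over pairs $\{c,d\}\subset\{3,\dots,n\}$. This gives $\big((n-3)(n-4)+2\big)K_{12}\ge-\mathcal{R}-(n-5)(\Ric_{11}+\Ric_{22})$.

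\textbf{The gap: $C_4=\sqrt3$.} This constant is part of the statement, and you leave it open with an unspecified $c$. Two corrections close it.

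First, your characterization of weak PIC is reversed. Write the curvature operator in blocks $A,B,C$ with respect to $\wedge^+\oplus\wedge^-$. Then $A=W^++\frac{\mathcal{R}}{12}I$ and $\tr A=\frac{\mathcal{R}}{4}$, so $a_1+a_2=\frac{\mathcal{R}}{4}-a_3=\frac{\mathcal{R}}{6}-\lambda_{\max}(W^+)$. Hence weak PIC means $\lambda_{\max}(W^\pm)\le\frac{\mathcal{R}}{6}$, and Fubini--Study attains equality. It does not mean $\lambda_{\min}(W^\pm)\ge-\frac{\mathcal{R}}{6}$. By the symmetry $w\mapsto-w$ of the extremal problem, your reversed condition happens to give the same number, but the equivalence as you stated it is false.

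Second, no joint optimization over Weyl and Ricci parts is needed; the separate bounds add up exactly:
\begin{itemize}
\item Maximizing $\sum w_i^2$ subject to $\sum w_i=0$ and $w_i\le\frac{\mathcal{R}}{6}$ gives $|W^\pm|^2\le\frac{\mathcal{R}^2}{6}$ as operators. Hence $|W|^2\le\frac43\mathcal{R}^2$ as a $(0,4)$-tensor, since the squared tensor norm is four times the squared operator norm.
\item (ii) with $c_4=1$ gives $2|\mathring{\Ric}|^2\le\frac32\mathcal{R}^2$.
\item Adding the scalar part, $\frac43+\frac32+\frac16=3$.
\end{itemize}
This is the paper's estimate $4(|A|^2+2|B|^2+|C|^2)\le3\mathcal{R}^2$ rewritten, since $|A|^2=|W^+|^2+\frac{\mathcal{R}^2}{48}$ and $|B|^2=\frac14|\mathring{\Ric}|^2$. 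Your version has one small advantage: it gets the bound on $B$ from (ii) rather than from Cao--Xie's lemma.
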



\begin{proof}
    Let $\Lambda_1\le\Lambda_2\le\cdots\le\Lambda_n$ be the eigenvalues of
            $\Ric$ operator.
    Then $\Lambda_i\ge\Lambda_2\ge\max\{-\Lambda_1,0\},\;i=3,\ldots,n$.
    Therefore, we have $\mathcal{R}\ge0$ and
    \[
        \mathcal{R}=\Lambda_1+\sum_{i=2}^n\Lambda_i\ge-(n-2)\Lambda_1.
    \]
    Hence $\Lambda_1\ge-\frac{\mathcal{R}}{n-2}$, which implies
            \textup{(\romannumeral1)}.

    By the definition, we have
    \[
        \mathcal{R}^2-|\Ric|^2
                =(\sum_{i=1}^n\Lambda_i)^2-\sum_{i=1}^n\Lambda_i^2
                =2\sum_{i<j}\Lambda_i\Lambda_j.
    \]
    If $\Lambda_1\ge0$, then $\sum_{i<j}\Lambda_i\Lambda_j\ge0$ and hence
            $|\Ric|\le\mathcal{R}$.
    If $\Lambda_1<0$, then
    \begin{align*}
        \sum_{i<j}\Lambda_i\Lambda_j&=\sum_{i=2}^n\Lambda_1\Lambda_i
                +\sum_{2\le i<j\le n}\Lambda_i\Lambda_j \\
        &=\Lambda_1\sum_{i=2}^n(\Lambda_i+\Lambda_1)-(n-1)\Lambda_1^2
                +\sum_{2\le i<j\le n}(\Lambda_i+\Lambda_1)(\Lambda_j+\Lambda_1) \\
        &\qquad+\frac{1}{2}(n-1)(n-2)\Lambda_1^2-\Lambda_1
                \sum_{2\le i<j\le n}(\Lambda_i+\Lambda_1+\Lambda_j+\Lambda_1) \\
        &=\frac{1}{2}(n-1)(n-4)\Lambda_1^2
                -(n-3)\Lambda_1\sum_{i=2}^n(\Lambda_i+\Lambda_1) \\
        &\qquad+\sum_{2\le i<j\le n}(\Lambda_i+\Lambda_1)(\Lambda_j+\Lambda_1) \\
        &\ge0,
    \end{align*}
    provided $n\ge4$.
    For $n=3$, we have $0\le\Lambda_2\le\Lambda_3
            \le\mathcal{R}=\Lambda_1+\Lambda_2+\Lambda_3$ and
            $\Lambda_1\ge-\Lambda_2\ge-\mathcal{R}$.
    Then $|\Lambda_i|\le\mathcal{R},\;i=1,2,3$ and hence
            $|\Ric|\le\sqrt{3}\mathcal{R}$.
    This proves \textup{(\romannumeral2)}.

    It remains to show \textup{(\romannumeral3)}.
    For $n=3$, the Weyl tensor vanishes and by \cite[(1.57)]{CLN06Book},
            we have
    \[
        \Rm=-\frac{\mathcal{R}}{4}g\owedge g+\Ric\owedge g.
    \]
    For any fixed point $p\in M$, we can choose an orthonormal basis
            $\{e_1,e_2,e_3\}$ of $T_pM$ such that
            $\Ric(e_i,e_j)=\Lambda_i\delta_{ij},\;i,j=1,2,3$.
    Then
    \[
        R_{ijij}=-\frac{\mathcal{R}}{2}+\Lambda_i+\Lambda_j
                \quad\text{for}\quad1\le i<j\le3.
    \]
    It follows that
    \begin{align*}
        |\Rm|^2&=\sum_{i,j,k,l=1}^3R_{ijkl}^2=4\sum_{1\le i<j\le3}R_{ijij}^2 \\
        &=4\sum_{k=1}^3\left(\frac{\mathcal{R}}{2}-\Lambda_k\right)^2 \\
        &=4|\Ric|^2-\mathcal{R}^2\le11\mathcal{R}^2.
    \end{align*}

    For $n\ge4$, we argue by contradiction.
    Suppose there exist $\Rm_k$ for $k=1,2,\ldots$ with $2$-nonnegative Ricci
            curvature and nonnegative isotropic curvature
            such that $|\Rm_k|>k\mathcal{R}_k$.
    Let
    \[
        \widetilde{\Rm}_k:=\frac{\Rm_k}{|\Rm_k|}.
    \]
    Then $|\widetilde{\Rm}_k|=1$ and $\widetilde{\mathcal{R}}_k<1/k$.
    Since the set of unit-norm algebraic curvature operators with
            $2$-nonnegative Ricci curvature and nonnegative isotropic
            curvature is compact, after passing to a subsequence, there exists
            a limit $\widetilde{\Rm}_\infty$ such that
            $|\widetilde{\Rm}_\infty|=1$ and $\widetilde{\mathcal{R}}_\infty=0$.
    It follows that $\widetilde{\Ric}_\infty=0$ and hence by
            \cite[Proposition 7.3]{Brendle10Book}
            (see also \cite[Proposition 2.5]{MW93Duke}), we have
            $\widetilde{\Rm}_\infty=0$, which contradicts
            $|\widetilde{\Rm}_\infty|=1$.
    In particular, for $n=4$, we can write
    \[
        \Rm=\begin{pmatrix}
            A & B \\
            B^T & C
        \end{pmatrix}
    \]
    with respect to the decomposition $\wedge^2=\wedge^+\oplus\wedge^-$ after we choose a local orientation. Denote by $A_1\le A_2\le A_3$ the eigenvalues of $A$ and
            $B_3\ge0$ the largest singular value of $B$. Since $\tr(A)=\frac{\mathcal{R}}{4}$ and $A_1+A_2\ge0$, we have
            $A_3\le\frac{\mathcal{R}}{4}$ and hence $-\frac{\mathcal{R}}{4}\le
            A_i\le\frac{\mathcal{R}}{4},\;i=1,2,3$.
    Then $|A|^2\le3\left(\frac{\mathcal{R}}{4}\right)^2
            =\frac{3}{16}\mathcal{R}^2$.
    Similarly, we have $|C|^2\le\frac{3}{16}\mathcal{R}^2$.
    By \cite[Lemma 2.2]{CX25JMPA}, we know that $2$-nonnegative Ricci curvature
            implies $B_3\le\frac{\mathcal{R}}{4}$ and hence
            $|B|^2\le3\left(\frac{\mathcal{R}}{4}\right)^2
            =\frac{3}{16}\mathcal{R}^2$.
    Therefore, we obtain
    \[
        |\Rm|^2=4\big(|A|^2+2|B|^2+|C|^2\big)\le3\mathcal{R}^2.
    \]
    This completes the proof of \textup{(\romannumeral3)}.
\end{proof}

\bibliographystyle{amsplain}
\bibliography{88-refs}

\end{document}